   \newtheorem{theorem}{Theorem}[section]
   \newtheorem{prop}[theorem]{Proposition}
   \newtheorem{lemma}[theorem]{Lemma}
   \newtheorem{corollary}[theorem]{Corollary}
\theoremstyle{definition}
   \newtheorem{example}[theorem]{Example}
   \newtheorem{definition}[theorem]{Definition}
   \newtheorem{remark}[theorem]{Remark}
\newcommand{\N}{{\mathbb{N}}}
\newcommand{\Z}{{\mathbb{Z}}}
\newcommand{\A}{{\mathbb{A}}}
\renewcommand{\O}{\mathcal{O}}
\newcommand{\M}{{\mathcal M}}
\newcommand{\Spec}{\operatorname{Spec}}
\newcommand{\Hom}{{\operatorname{Hom}}}
\newcommand{\codim}{\operatorname{codim}}
\newcommand{\isom}{\simeq}
\renewcommand{\th}{^\mathrm{th}}
\newcommand{\chr}{\operatorname{char}}
\newcommand{\rank}{\operatorname{rank}}
\newcommand{\m}{\mathfrak{m}}
\title{Irreducibility for log arc schemes}
\author{Balin Fleming}
\email{bkflem@umich.edu}
\begin{document}
\begin{abstract}We characterise when the log arc scheme of a fine log scheme $(X, \M)$, with $X$ a variety over a field of characteristic zero, is irreducible. This generalises the theorem of Kolchin that the (ordinary) arc scheme of $X$ is irreducible exactly when $X$ is irreducible. \end{abstract}
\maketitle

\setcounter{section}{1}
\thispagestyle{empty}
\subsection{Introduction.} The recent construction of jet schemes in log geometry in general \cite{Dutter} leaves us with many natural questions about how much of the theory of ordinary jet schemes may be carried over to analogues in the setting of log geometry. Some progress has been made in developing this correspondence, for example Karu and Staal \cite{KS} give a log geometry version of Musta\c{t}\v{a}'s well-known theorem \cite{M} characterising when the ordinary jet schemes of a local complete intersection are irreducible. 

A log scheme is simply a scheme $X$ together with a sheaf of monoids $\M$ on $X$ and a morphism $\M \to \O_X$ to the multiplicative monoid of the structure sheaf $\O_X$ which induces an isomorphism on units. In the basic example, $\M$ is locally generated as a monoid by local equations for a chosen normal crossing divisor $D$. Sometimes one thinks of $D$ as a ``boundary'' or ``divisor at infinity'' for the space $X-D \subseteq X$. The formalism of the monoid sheaf $\M$, introduced in Kato's paper \cite{Kato}, gives a unified way to generate geometric data with logarithmic (that is, order one) poles along $D$. Although Kato pointed toward arithmetic uses, for example taking $X$ to be a scheme with semi-stable reduction over a discrete valuation ring and $D$ its closed fibre to construct crystalline cohomology with logarithmic poles, we are interested here in log schemes (over a field) as geometric objects in themselves.

In this paper we consider the question of irreducibility for log arc schemes. For ordinary arc schemes the matter was settled in characteristic zero as a consequence of a more general theorem of Kolchin in differential algebra: the arc scheme $J_\infty(X)$ of an irreducible variety $X$ is irreducible. This behaviour is much simpler than that of jet schemes, where the singular locus of the base scheme $X$ frequently gives rise to ``extra'' irreducible components in the jet scheme $J_m(X)$. Given a fine log scheme $(X, \M)$ there is an additional elementary way to find ``extra'' components in the log jet scheme $J_m(X, \M)$: if the rank of the log structure $\M$ is too large along some stratum of $X$ compared to the codimension of the stratum then the log jet scheme $J_m(X, \M)$ is thickened by an affine bundle along that stratum. (In the case where the log structure of $X$ comes from a normal crossing divisor $D$, the rank $j$ strata of $X$ are the intersections of exactly $j$ irreducible components of $D$.) There is no obstruction to lifting the bundle to higher order, so this carries over to the log arc scheme $J_\infty(X, \M)$, which then has the same ``extra'' component. What we show is that avoiding this situation is sufficient: when the log structure $\M$ has the expected rank everywhere for its stratification, if $X$ is irreducible then so is $J_\infty(X, \M)$.

\begin{theorem}\label{maintheorem} Let $k$ be a field of characteristic zero. Let $(X, \M)$ be a fine log scheme over $(\Spec k, k^*)$, with $X$ irreducible of finite type, and let $J_\infty(X, \M)$ be its log arc scheme. Let $X_j$ be the rank $j$ stratum of $(X, \M)$ and let $r$ be the minimum rank of $\M$ on $X$. Then $J_\infty(X, \M)$ is irreducible if and only if $\codim_X X_j = j - r$ for all non-empty $X_j$ (that is, $(X, \M)$ is dimensionally regular in the sense of Definition~\ref{dimreg}). \end{theorem}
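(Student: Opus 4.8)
The plan is to reduce to the case $k = \bar{k}$ (standard), to work \'etale-locally with charts $X \to \Spec k[Q]$, $Q$ a fine monoid, through which $\M$ is pulled back from the toric log structure, and then to carry out a Kolchin-style argument: exhibit an irreducible subset of $J_\infty(X, \M)$ and show that dimensional regularity is precisely the condition for it to be dense.

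First I would describe the log data of a log arc $\gamma$ over its underlying arc: the ``free'' part consists of one unit of $k[[t]]^*$ for each local branch of the log locus containing (the image of) $\gamma$ --- equivalently, $\rho$ units, where $\rho$ is the rank of $\M$ at the generic point of the subvariety $V_\gamma$ that $\gamma$ scheme-theoretically dominates --- while the remaining units are forced by the underlying arc. Since $\rank \M$ is upper semicontinuous and $X$ is irreducible, the minimum-rank stratum $X_r$ is open and dense, a log arc whose center $\gamma(0)$ (the image of the closed point of $\Spec k[[t]]$) lies in $X_r$ is contained in $X_r$, and so the set $Z_r$ of such arcs equals $J_\infty(X_r, \M|_{X_r})$, the log arc scheme of the open constant-rank log subscheme, each member carrying exactly $r$ free units. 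Using the toric charts one checks that $Z_r$ is irreducible, its irreducibility reducing to that of $J_\infty(X_r)$ --- Kolchin's theorem --- because the log structure has constant rank on $X_r$, so the log data is an unobstructed bundle of units over the underlying arc space. Thus $\overline{Z_r}$ is the candidate for the unique irreducible component, and the theorem reduces to the assertion that $\overline{Z_r} = J_\infty(X, \M)$ exactly when $(X, \M)$ is dimensionally regular.

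For the ``only if'' direction I would first record the inequality $\codim_X X_j \le j - r$, valid for any fine log scheme over an irreducible $X$: a chart realising $\overline{\M}_x \cong Q$ of rank $j$ makes $X_j$, near $x$, the fibre over the deepest point of $\Spec k[Q]$, while the generic point of $X$ maps to the codimension-$r$ stratum there, so the fibre-dimension inequality gives $\dim X_j \ge \dim X - (j - r)$. Hence a failure of dimensional regularity is a strict inequality $\codim_X X_{j_0} < j_0 - r$, and then, as in the introduction, the log arcs dominating $V = \overline{X_{j_0}}$ --- which carry $j_0$ free units --- form an affine-bundle thickening of $J_\infty(X, \M)$ along $X_{j_0}$: the dimensions of its truncations to finite order $m$ grow in $m$ with leading coefficient at least $\dim V + j_0 = \dim X + j_0 - \codim_X X_{j_0}$, strictly greater than the coefficient $\dim X + r$ for the truncations of $\overline{Z_r}$. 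Since this thickening lifts compatibly to all orders, it yields an irreducible closed subset of $J_\infty(X, \M)$ not contained in $\overline{Z_r}$, so $J_\infty(X, \M)$ is reducible.

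The ``if'' direction is the crux. Assuming dimensional regularity, I must show every log arc $\gamma$ lies in $\overline{Z_r}$, and I would argue by descending induction on the rank $j$ of the center $\gamma(0)$, the base case $j = r$ being $\gamma \in Z_r$. For the inductive step, working in a toric chart near $\gamma(0)$, I would realise $\gamma$ as a limit of log arcs whose centers lie in strata of rank $< j$, and which therefore lie in $\overline{Z_r}$ by induction: one perturbs the underlying arc so that its center leaves the deepest stratum $X_j$ while simultaneously pushing the vanishing orders of the relevant branch equations to infinity, so that the units which become free along $X_j$ reappear as limits of the units forced along the generic strata. The equality $\codim_X X_j = j - r$ is exactly what provides the room for this: it says the family of log arcs centered along $X_j$ has its expected dimension, so no free-unit direction is left unaccounted for in the limit. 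I expect this limiting construction --- carried out on the arc scheme rather than on finite jets, with the toric-chart computations and the patching of local models made rigorous and assembled into the induction over strata --- to be the main obstacle.
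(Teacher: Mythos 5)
Your overall architecture matches the paper's: the necessity direction via the jet-level dimension count ($\dim J_m(X,\M)_j^{sm}$ eventually exceeds $\dim J_m(X,\M)_r^{sm}$ when $\codim_X X_j < j-r$) is essentially Proposition~\ref{maintheoremnec}, and the identification of $\overline{Z_r}$ as the candidate component via Kolchin plus the bundle description of $J_\infty(X,\M)_j \to J_\infty(X_j)$ is Propositions~\ref{logjetbundle} and~\ref{kolchinstratum}. That half is fine.

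The gap is in the sufficiency direction, which you correctly identify as the crux but leave as a sketch. The decisive objection is that your limiting construction, as described, nowhere uses $\chara k = 0$ beyond invoking Kolchin, yet the statement is \emph{false} in characteristic $p$ even over perfect fields where Kolchin holds: Example~\ref{positivecharexample} ($X = \Spec k[x]$ with $\M$ generated by $x^p$) is dimensionally regular, but the log arcs over $x \neq 0$ are not dense in $J_\infty(X,\M)$ --- precisely because ``pushing the vanishing order of $x$ to infinity so that the forced unit of $x^p$ reappears as a free unit'' only produces series in $t^p$, not arbitrary units. So your perturbation heuristic would ``prove'' a false statement, which means it is missing the essential mechanism. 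In the paper that mechanism is supplied by Lemma~\ref{smallstrucchange}: after restricting to the smooth locus of two adjacent strata and normalising (another step needing care in positive characteristic), one replaces $\M$ near the generic point of the codimension-one stratum by the divisorial log structure $\M'$ generated by a local equation, via the valuation map $\M_\xi \to \O_{X,\xi}/\O_{X,\xi}^* \cong \N$; the point is that this map of log structures induces an isomorphism on log arc spaces because the induced map on group completions becomes invertible after tensoring with $k$ --- and this is exactly where characteristic zero enters (an integer matrix with finite kernel and cokernel is invertible over $\Q$ but not over $\FF_p$). Only after this reduction does one land in an explicit \'etale-local model $\Spec k[x,z_1,\dots,z_t]$ with log structure generated by $x$, where density is a direct computation (Proposition~\ref{finitemonoidalgebra}). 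Without some substitute for this step, your induction over strata cannot close.

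A secondary remark: the paper's induction runs bottom-up over strata (density of $J_\infty(X,\M)_r$ in $J_\infty(X,\M)_{r+1}$, then in the next level, using Proposition~\ref{dimregchar} to know each $X_{\ell}$ lies in the closure of $X_{\ell-1}$), reducing each step to the two-stratum Lemma~\ref{basicstep}; your descending induction on the rank of the centre of a single arc would need the same two-adjacent-strata comparison in each step, so it is not genuinely different --- it just relocates the unproved content.
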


Our exposition is essentially self-contained with respect to the content of the theory of log geometry that we will need. In Sections 2 and 3 we recapitulate some definitions and basic notions of commutative monoids and then of log schemes. Here and after we give a number of simple examples to illustrate how these notions may behave. In Section 4 we briefly recall the ordinary notions of jets and arcs, which underlie log jets and log arcs, and give two characterisations of log jet schemes: one functorial, corresponding to the view of the $k$-points of the jet or arc schemes of $X$ being the $k[t]/(t^{m+1})$-points or $k\llbracket t \rrbracket$-points of $X$, and one algebraic, corresponding to the view of (higher-order) differentials as being co-ordinates on jet or arc schemes. \'Etale maps between varieties work especially well for the induced maps on their jet or arc spaces, and in Section 5 we recall the notion of log \'etaleness to this end for maps of log jet or log arc spaces. In Section 6 we consider concretely the case of the log arc scheme of a monoid algebra. This is not strictly necessary for what follows, but the argument is not far from the definitions of Section 4, and the result here has fewer conditions on the log structure or the ground field $k$ than does Theorem~\ref{maintheorem}. 

Finally in Section 7 we introduce first our condition we will call {\em dimensional regularity} and put it in context in log geometry. It is automatic for example when a fine saturated log scheme $(X, \M)$ is log smooth, but is much weaker than this; it is just a combinatorial condition on the the stratification of $X$ by the rank of $\M$. We then prove our main result, Theorem~\ref{maintheorem}. The plan is to show that the closure of the space of log arcs at one level of the stratification includes the log arcs on the smooth locus of the next level down, and then use Kolchin's theorem for ordinary arc schemes to see that the log arcs on the smooth locus of each stratum are dense in the log arc space of the whole stratum.

\begin{remark} We stated Kolchin's theorem and our Theorem~\ref{maintheorem} for an irreducible scheme $X$. Another version of these is the statement that the irreducible components of the arc space $J_\infty(X)$ or log arc space $J_\infty(X, \M)$ biject with the irreducible components of $X$ through the projection maps $\pi: J_\infty(X) \to X$ or $\pi: J_\infty(X, \M) \to X$, assuming each component of $X$ is separately dimensionally regular. This statement with $X$ allowed reducible implies a fortiori Theorem~\ref{maintheorem}, but the two are in fact equivalent (and we will pass between them as convenient). The reason for this is that an arc, ordinary or log, on $X$ is contained inside some irreducible component $Z$ of $X$. That is to say, a map $\Spec k\llbracket t \rrbracket \to X$ factors through an inclusion $i: Z \to X$, because $\Spec k\llbracket t \rrbracket$ is integral. It follows that the preimage $\pi^{-1} Z$ of $Z$ is simply $J_\infty(Z)$ or $J_\infty(Z, i^*\M)$. (This does not occur with jets in place of arcs.) 

The same observation shows that an arc, ordinary or log, on $X$ factors through the reduced structure $X_{red}$ of $X$, so that it makes no difference whether we assume $X$ to be reduced or not. (Again for jets it certainly makes a difference.) \end{remark}

\begin{remark} Kolchin's theorem is known to hold exactly for perfect fields \cite{NS}, and one may ask for a criterion for irreducibility for log arc spaces in this generality as well. In Section~\ref{positivechar} we briefly consider how much of our argument for Theorem~\ref{maintheorem} carries over to perfect fields of positive characteristic. There the condition of dimensional regularity is not sufficient (see Example~\ref{positivecharexample}), so something more must be required. We do not attempt to give a necessary and sufficient condition in this case.

Of course, since every ordinary arc scheme is also a log arc scheme (for the trivial log structure), the counterexamples to Kolchin's theorem over non-perfect fields apply also to the irreducibility theorem for log arc schemes. \end{remark}

\subsection{Terminology.} In this note we are often concerned with terms and concepts (jets and arcs, smoothness and \'etaleness, and so forth) which appear in both the usual setting of the category of schemes and in the setting of the category of log schemes. Wherever a term is used in the log scheme sense we will indicate this by including ``log'' in its name (so, log jets and log arcs, log smoothness and log \'etaleness, and so forth). Sometimes when a term is used in its usual scheme-theoretic sense we will emphasise the distinction by calling it ``ordinary'' (so, ordinary jets and ordinary arcs, and so forth). 

\subsection{Acknowledgements.} The author thanks Kalle Karu for his generous advice and discussion, as well as for helpful comments on a draft of this paper. 

This work was supported by a fellowship from the Department of Mathematics at the University of Michigan, while visiting Prof. Karu and the Department of Mathematics at the University of British Columbia.

\section{Monoids}

By a {\em monoid} we mean always a commutative semigroup with an identity element. Most often we will write the monoid operation multiplicatively, but sometimes additive notation will be apt. In particular we will use the natural numbers $(\N, +)$ under addition, where $\N = \{0, 1, 2,...\}$, to stand for the free monoid on one generator. When the generator is specified to be some element $x$ of a ring $R$, we write by abuse of notation $\N x = \{ 1, x, x^2, ...\}$ to stand for the set of powers of $x$. Since we never consider the set of integer multiples of $x$ we hope that no confusion arises thereby.

\subsection{Monoid algebras.} To a multiplicative monoid $P$ we associate the monoid algebra $k[P]$, which by definition is the quotient of the polynomial ring on the set $P$ by the relations between monomials in the variables which hold of them in the monoid $P$ (and by identifying the neutral elements $1 \in P$ and $1 \in k$). Equivalently, one may take construct $k[P]$ as the quotient of the polynomial ring on a set of generators of the monoid $P$ by the relations which hold among the monomials in these generators. Thus $k[P]$ is a quotient of a polynomial ring by a {\em pure binomial ideal}; we also sometimes say that it is a quotient given by {\em monomial relations}. Conversely such a presentation $$R = k[\{x_i\}_{i \in I}]/J$$ with a pure binomial ideal $J$ determines a monoid $P$, generated by the symbols $x_i$, for which $R = k[P]$.

By abuse of terminology we also call the affine scheme $\Spec k[P]$ a monoid algebra, (partly to avoid conflation with the notion of ``monoidal space'' which has been used by other authors to refer to something different).

\subsection{Basic notions and properties.} A map $P \to Q$ of monoids induces a $k$-algebra map $k[P] \to k[Q]$.

A monoid $P$ is {\em finitely generated} if there is a surjection $\N^r \to P$ for some integer $r \geq 0$. Equivalently, all the elements of $P$ may be written as monomials in some finite generating set of elements of $P$. Such a description gives $\Spec k[P]$ as a closed subscheme of affine space $\A^r = \Spec k[\N^r]$. 

A monoid $P$ has a {\em group completion} $P^{gp}$, that is, a group $P^{gp}$ with a map $P \to P^{gp}$ with the universal property that a monoid morphism $P \to G$ to any group $G$ factors through $P^{gp}$. One may construct $P^{gp}$ as the set of fractions $pq^{-1}$ with $p, q \in P$, where $pq^{-1} = rs^{-1}$ in $P^{gp}$ if there is $t \in P$ such that $tps = trq$. 

A monoid $P$ is called {\em integral} if whenever an equation $pq = pr$ holds in $P$ one has $q = r$. Equivalently, $P$ is integral if and only if the map $P \to P^{gp}$ is an inclusion. This gives also an inclusion $k[P] \to k[P^{gp}]$. If in addition $P^{gp}$ is torsion-free then $k[P^{gp}]$ is a domain, according to a classical result for group rings, so that $k[P]$ is then also a domain. (But $P$ being integral, or integral and torsion-free, is not sufficient for $k[P]$ being a domain in general.)

A monoid both integral and finitely generated is called {\em fine}.

An integral monoid $P$ is called {\em saturated} if whenever some power $g^n$ lies in $P$, where $g \in P^{gp}$ and $n \geq 1$, one has $g \in P$ also. 


To a monoid $P$ there is associated a universal integral monoid $P^{int}$, which may be realised as the image of $P \to P^{gp}$. This in turn has a universal saturation $P^{sat} \subseteq P^{gp}$, consisting of all the elements of $P^{gp}$ for which some positive power lies in $P^{int}$.

The set $P^*$ of invertible elements of $P$ is its group of units. A monoid $P$ with $P^* = 1$ is called {\em sharp}. A monoid $P$ has a universal map to a sharp monoid, namely to the quotient $P/P^*$ (see Section~\ref{idealsfacesquotients} below).

\begin{example} A rational cone in an integer lattice is a fine, saturated, torsion-free monoid. Conversely, if $P$ is a fine saturated torsion-free monoid, then $P^{gp}$ is torsion-free, for the saturation of any submonoid of $P^{gp}$ includes all the torsion elements of $P^{gp}$. Then $P^{gp}$ is free and $P$ is a rational cone in $P^{gp}$. The monoid algebra $\Spec k[P]$ is a (normal) affine toric variety. \end{example}

\begin{example}\label{cuspidalcubic} The subset $P = \N-\{1\} = \{0, 2, 3, 4,....\}$ of the natural numbers under addition is a monoid. Writing this multiplicatively as powers of a variable $t$, its monoid algebra is $$k[P] = k[t^2, t^3] \isom k[x, y]/(x^3 - y^2),$$ the co-ordinate ring of a cuspidal plane cubic curve. The inclusion $P \subseteq \N$ gives $P^{gp} \subseteq \N^{gp} = \Z$. In fact $P^{gp} = \Z$, for example because $1 = 4 - 3$ is a difference of elements of $P$ (in other words, because $P^{gp}$ is a subgroup of $\Z$ containing $2, 3$). We see that $P$ is integral and finitely generated, so is fine, but is not saturated. Its saturation is $\N$, and the natural map $k[P] \to k[P^{sat}] = k[t]$ is the normalisation map of the cuspidal curve. \end{example}

\begin{example}\label{grouphastorsion} Let $Q$ be generated by two elements $x, y$ subject to the single relation $x^2 = y^2$. Then $Q$ is integral and torsion-free. The spectrum of the monoid algebra $k[Q] = k[x,y]/(x^2 - y^2)$ consists of two lines meeting at the point $x = y = 0$. One has $$Q^{gp} \isom (\Z/2\Z) \omega \times \Z x$$ by identifying the generator $y$ with $\omega\cdot x$, and $$Q^{sat} = (\Z/2\Z) \omega \times \N x \subseteq Q^{gp}.$$ So $k[Q^{sat}] = k[x, \omega]/(\omega^2 -1).$ The map $\Spec k[Q^{sat}] \to \Spec k[Q]$ glues the two lines of $\Spec k[Q^{sat}]$ with $\omega = \pm 1$ together at $x = 0$, while $\Spec k[Q^{gp}] \to \Spec k[Q]$ just includes the complement of the node of $\Spec k[Q]$. \end{example}

\subsection{Ideals, faces, and quotients.}\label{idealsfacesquotients} An {\em ideal} $I$ of $P$ is a submonoid of $P$ closed under multiplication by elements of the monoid, $IP \subseteq I$. An ideal $I$ is called prime if whenever $pq \in I$ one of $p, q$ lies in $I$. Equivalently, an ideal $I$ is prime if the complementary set $P - I$ is a submonoid of $P$ (in fact, a face of $P$: see below). 

For an ideal $I$ of $P$ there is a quotient monoid $P \to P/I$ which may be realised as the set $P - I$ together with a ``zero element,'' corresponding to the subset $I$. If a product of elements in $P-I$ lies in $I$ then in $P/I$ the product is zero (i.e., it is the class of $I$). If $I$ is prime then this does not happen, and $P/I$ may be realised as the {\em monoid} $P-I$ together with a zero element, (where zero times $P-I$ is zero).

A {\em face} $F$ of a monoid $P$ is a submonoid of $P$ such that whenever $p, q\in P$ have $pq \in F$, then in fact both $p, q$ lie in $F$. The complementary set $P - F$ of a face $F$ is a prime ideal of $P$, and vice versa. The unit group $P^*$ of $P$ is a face of $P$, in fact the minimum face of $P$. 

There is a natural quotient $P/P^*$, realised as the set of cosets of $P^*$ in $P$. This is a different kind of quotient of $P$ than those by ideals of $P$. 

\begin{example} The usual group quotient $\Z/2\Z$ is a monoid of two elements, as is the monoid quotient $\N/\{1, 2, ...\}$, but they are not the same monoid: the latter has a non-trivial idempotent $1 + 1 = 1$. One might write it instead as the set $\{0, \infty\}$, with $\infty$ being the additive version of what the zero element is for multiplicative monoids.\end{example}

\begin{example} View the monoid $\N^2$ as the integer lattice points of the first quadrant of the plane. It has three proper faces, namely the origin and the two copies of $\N$ along the co-ordinate axes, so three prime ideals, namely the complement of the origin and the points of positive vertical or horizontal co-ordinate. Any point $p \in \N^2$ generates an ideal $p + \N^2$, a translated cone in the plane. The ideals of $\N^2$ are unions of such cones (which may then be realised as finite unions of such cones). Equivalently, an ideal of $\N^2$ is the set of points above a descending envelope in the first quadrant. \end{example}

\begin{example} A field $k$, considered as a multiplicative monoid, has unit group $k^*$, and the quotient monoid $k/k^*$ is the (sharp) two-element multiplicative monoid $\{0, 1\}$. Here there is an asymmetry between the two points of $k/k^*$ in that the unit group acts freely on the orbit $k^* \subseteq k$ of $1 \in k$ but trivially on the orbit $\{0 \} \subseteq k$. \end{example}

\section{Log schemes}

A log scheme will be a scheme $X$ together with some combinatorial data encoded in a sheaf of monoids $\M$ on $X$. Any scheme may become a log scheme in many different ways, including trivially, so in complete generality $\M$ does not provide much control. There are various conditions on the pair $(X, \M)$ one may ask for to provide a measure. For example one commonly works in the category of log schemes with fine and saturated log structure, which avoids many possible pathologies in both $\M$ and $X$. Fine and saturated log smooth varieties are modelled in a strong sense on toric varieties \cite{KatoTS}. Like toric varieties they are normal and Cohen-Macaulay. But the general local model for a log scheme is just of an arbitrary map $X \to \Spec k[P]$ of a scheme $X$ to a monoid algebra.

\subsection{Log structures and charts.} A log scheme over $k$ is a pair $(X, \M_X)$, where $X$ is a scheme over $k$ and $\M_X$ is a sheaf of monoids on $X$ with a monoid morphism $\alpha_X: \M_X \to \O_X$ to the multiplicative monoid $(\O_X, \cdot)$ that induces an isomorphism $\M_X^* = \alpha_X^{-1} \O_X^* \to \O_X^*$ on units. The sheaf $\M_X$ is called a {\em log structure} on $X$. The map $\alpha_X$ need not be injective, although in many examples $\M_X$ will be a subsheaf of $\O_X$. The category of log structures on $X$ has an initial object, which is $\O_X^*$ (as a sub-monoid sheaf of $\O_X$), also called the trivial log structure on $X$, and a final object, which is $\O_X$, since by definition there are natural maps $\O_X^* \to \M_X \to \O_X$.

Any sheaf of monoids $P_X$ on $X$ with a monoid morphism $\alpha: P_X \to \O_X$ (this much data is called a {\em pre-log structure}) generates an associated log structure $P_X^a$, which may be realised as the fibred sum (in the category of sheaves of monoids) of the diagram \begin{center} $ \xymatrix{ & P_X \\ \O_X^* & \alpha^{-1} \O_X^* \ar[l] \ar[u]\\ } $ \end{center}

In particular if one specifies a monoid $P$ and a map $P \to \O_X(X)$ one obtains a log structure on $X$ generated by $P$, namely, that associated to the constant sheaf $P$ determines. A monoid $P$ is called a {\em chart} for the log structure it generates. 

A log structure on $X$ is called {\em coherent} if it has, Zariski-locally, charts by finitely generated monoids. Likewise the log scheme $(X, \M_X)$ is called fine or saturated if it has, Zariski-locally, charts by fine or saturated monoids. A map $P \to \M_X(U)$ over an open set $U \subseteq X$ is a chart if and only if it induces isomorphisms $P^a_x \to \M_{X, x}$ at stalks for $x \in U$. In any case, a map $P \to \O_X(U)$ determines a map $U \to \Spec k[P]$ to the monoid algebra of $P$. Sometimes this map to $\Spec k[P]$, rather than from the monoid $P$, is called a chart on $U$. 

\begin{example} A monoid algebra $k[P]$ naturally gives a log scheme, with the log structure given by the chart $P$. We call this the standard structure of $\Spec k[P]$ as a log scheme. \end{example}

\begin{example} One of the principal motivating examples arises from a scheme $X$ together with a (Cartier) normal crossing divisor $D$ on $X$. For a point $x \in X$ let $D_1, ..., D_r$ be the prime components of $D$ passing through $x$ and take a chart $\N^r$ near $x$, with the standard generators of $\N^r$ mapping to local equations for the components $D_i$. Since the choice of equations is not canonical these charts typically will not glue together by themselves, hence one expands them as monoids to include the units near $x$. Now various constructions one makes from the log structure, like the sheaf of log differentials, are interpreted as objects with logarithmic (i.e., order one) poles along $D$. The dual objects, like the sheaf of log tangent vectors (and, we shall see, log jets in general) are thereby interpreted as objects with zeroes of order one along $D$.

More generally, let $U$ be an open set in $X$, and take $\M$ to be the subsheaf of $\O_X$ of functions invertible on $U$. That is, for $V \subseteq X$ open put $$\M(V) = \{ f\in \O_X(V) \text{ such that } f\vert_U \in \O_X(U \cap V)^*\}.$$ This is a monoid sheaf under multiplication, and is a log structure on $X$. When $X - U$ is a normal crossing divisor $D$, this $\M$ is the same log structure as constructed above. In general, this $\M$ is the pushforward (see Section~\ref{catlogschemes} below) of the trivial structure on $U$ along the inclusion $U \to X$. \end{example}

\begin{example}\label{a2ex} The affine plane $X = \Spec k[\N^2] = \Spec k[x,y]$ with its standard log structure $\M$, generated as a submonoid sheaf of $\O_X$ by the monomials $x$ and $y$ together with $\O_X^*$, is a special case of the last two examples. 

One may go from the chart $P = \N^2$ to the associated log structure $\M$ as follows. On the open subset $U = \Spec k[x,y]_x$ of $X$ the monomial $x$ is a unit, so that in the fibred sum of $P$ and $\O_X(U)^*$ the element $x \in P$ is identified with the unit $x \in \O_X(U)^*$. Consequently on $U$ the monoid $\M$ is generated just by the monomial $y$ together with $\O_X^*$. Likewise on the open set $V = \Spec k[x,y]_y$ the log structure $\M$ is generated by $x$. On $U \cap V = \Spec k[x,y]_{xy}$ the log structure is trivial, $\M\vert_{U \cap V} = \O_{U\cap V}^*$. \end{example}

\begin{example}\label{a1twocharts} Consider the affine line $\Spec k[x]$ with log structure $\M$ given by a chart $\N \to k[x]$ taking the generator of $\N$ to the polynomial $x(x-1)$. That is, the chart is given by the inclusion of algebras $k[x(x-1)] \to k[x]$. The global sections of $\M$ are not just the monoid sum $k[x]^* \oplus \N$. In fact, thinking of $\M$ as a subsheaf of $\O_X$, both $x$ and $x-1$ are global sections. For example, away from $x = 0$ the function $x$ is a unit, so is a section of $\M$, while away from $x = 1$ the function $$x = (x-1)^{-1} \cdot x(x-1)$$ is a unit $(x-1)^{-1}$ times a section $x(x-1)$, so is a section of $\M$. So $x$ is a global section; and similar for $x-1$. 

In this example one has two global charts, by $\N x(x-1)$ and by $\N^2 = \N x \oplus \N (x-1)$, whose (abstract) group completions have different rank. The latter is closer to the global structure of $\M$, in the sense that $\M(X) \isom \N^2 \oplus k[x]^*$, while the former is closer to the local structure of $\M$ at the points $x = 0, 1$, in the sense that the induced map $$\N x(x-1) \to \M_p/\O_{X,p}^*$$ is an isomorphism at these two points $p$.\end{example}

\begin{remark}\label{goodchartdef} A chart $P \to \M_X$ of a fine log scheme $(X, \M_X)$ which induces an isomorphism $P \to \M_{X, x}/\O_{X,x}^*$ is called {\em good} at $x$. Good charts do not always exist. The next proposition gives some simple cases. \end{remark}

\begin{prop}\label{goodchart} Let $(X, \M)$ be a fine log scheme, $x$ a point of $X$, and $P = \M_x/\O_{X, x}^*$. Then: \begin{itemize} \item[(1)] If $P^{gp}$ is torsion-free there is a chart $P \to \M$ near $x$. \item[(2)] If $X$ is normal near $x$ then there is a chart $P \to \M$ near $x$. \end{itemize} \end{prop}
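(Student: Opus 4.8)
The plan is to start from an arbitrary chart near $x$ — which exists because $(X,\M)$ is fine, hence coherent — and modify it so that the induced map on $P = \M_x/\O_{X,x}^*$ becomes an isomorphism. So suppose $Q \to \M$ is a chart on some neighbourhood of $x$, with $Q$ a fine monoid. The composite $Q \to \M_x \to \M_x/\O_{X,x}^* = P$ is surjective, since a chart induces an isomorphism $Q^a_x \to \M_{X,x}$ and sharpening is compatible with this. Choose finitely many elements of $Q$ mapping to monoid generators of $P$; these generate a submonoid $Q' \subseteq Q$ that still surjects onto $P$. Replacing $Q$ by $Q'$ (a chart by a submonoid is still a chart provided it still surjects onto the stalk, which one checks), we may assume the chart $Q \to P$ is a surjection of fine monoids. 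The issue is then to split this surjection, i.e.\ to find a section $P \to Q$ compatible with the maps to $\M$ — at least after possibly shrinking $X$ and twisting by units.

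First I would reduce to the level of groups: the surjection $Q \to P$ induces a surjection $Q^{gp} \to P^{gp}$, and for case (1) the hypothesis that $P^{gp}$ is torsion-free means $P^{gp}$ is a free abelian group of finite rank, so this surjection of abelian groups splits. Fix a splitting $s: P^{gp} \to Q^{gp}$. The key point is to promote $s$ to a chart: the composite $P^{gp} \to Q^{gp} \to \M_x^{gp} \to \O_{X,x}^*{}^{gp}$ — more precisely, comparing $s$ applied to a generating set of $P$ against the original lifts of those generators in $Q$ — differs from the ``correct'' lift only by units in $\O_{X,x}^*$. Explicitly, for each generator $p_i$ of $P$, both $s(p_i) \in Q^{gp}$ and any chosen preimage $q_i \in Q$ of $p_i$ map into $\M_x$ with the same image in $P = \M_x/\O_{X,x}^*$, so their images in $\O_X$ near $x$ differ by a unit $u_i \in \O_{X,x}^*$. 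After shrinking to a neighbourhood on which all the $u_i$ are defined and invertible, one builds the desired chart $P \to \M$ by sending $p_i$ to $u_i \cdot (\text{image of } q_i)$; that these assignments respect the monoid relations of $P$ follows because they do so in $Q^{gp}$ up to units and the target $\M$ is integral (being fine). One then checks this new map $P \to \M$ induces an isomorphism on $\M_x/\O_{X,x}^*$ by construction, hence is a chart, using the stalk criterion from Section~3.

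For case (2), when $P^{gp}$ may have torsion we cannot split $Q^{gp} \to P^{gp}$ as abelian groups in general, but normality of $X$ near $x$ rescues us. The torsion subgroup of $P^{gp}$ is finite, so $P^{sat}$ sits inside $P^{gp}$ with $P^{gp}$ having bounded torsion; the idea is that over a normal base, roots of units that are forced to exist by the monoid relations actually do exist in $\O_{X,x}^*$. Concretely, any torsion relation $g^n = 1$ in $P^{gp}$ that we need to realise corresponds, after pushing to $\O_X$, to needing an $n$-th root of a unit; since the relevant unit already maps to something with an $n$-th power equal to $1$ in the sharp quotient, its value in $\O_{X,x}$ is a unit whose $n$-th power we can control, and normality of $\O_{X,x}$ (integrally closed in its fraction field) lets us adjoin or rather locate the needed root within $\O_{X,x}^*$ after a further étale or Zariski shrinking — here one invokes that $k$ contains enough roots of unity, or passes to the strict Henselisation, whichever the paper's conventions allow. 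I expect the main obstacle to be exactly this torsion bookkeeping in case (2): making precise which roots of units are needed, checking they live in $\O_{X,x}^*$ using normality rather than merely in some extension, and confirming the resulting map $P \to \M$ is well-defined on all the monoid relations (not just the free part). The torsion-free case (1) should be essentially formal once the ``lift a group splitting to a chart, correcting by units'' mechanism is set up.
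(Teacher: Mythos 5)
Your argument for part (1) is essentially the paper's: in both versions the whole content is that a surjection onto $P^{gp}$ from the group completion of something generating $\M_x$ (for you $Q^{gp}$, for the paper $\M(U)^{gp}$ after invoking the isomorphism $\M(U)/\O_X(U)^* \isom \M_x/\O_{X,x}^*$ on a suitable neighbourhood $U$) splits because $P^{gp}$ is free, and the splitting restricted to $P \subseteq P^{gp}$ lands in $\M_x$ and respects every relation exactly because it is a group homomorphism. One tidy-up: once you fix the splitting $s$, the units $u_i$ are determined by $s(p_i) = u_i q_i$ and there is nothing left to check about relations; your phrase ``they do so in $Q^{gp}$ up to units'' undersells this, since ``up to units'' alone would not suffice --- it is the exact homomorphism $s$ that does the work.

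Part (2) has a genuine gap, which you have in effect flagged yourself. The missing idea is not roots of unity in $k$, and not strict Henselisation --- either of those would only produce a chart in an \'etale neighbourhood of $x$ (that is exactly what the paper's remark following the proposition settles for in the non-normal case), whereas the proposition asserts a Zariski-local chart. The correct mechanism is that the needed $n$-th root is already present as an explicit ratio of regular functions, and normality merely certifies that this ratio is regular. Concretely: if $fg^{-1} \in P^{gp}$ is torsion of order $n$, with $f, g \in \M(U)$, then $f^n = u g^n$ for some $u \in \O_X(U)^*$, so $u = (\alpha(f)/\alpha(g))^n$ and the element $v = \alpha(f)/\alpha(g)$ of the fraction field satisfies the monic equation $v^n - u = 0$ over $\O_X(U)$; normality forces $v \in \O_X(U)$, and $v$ is a unit because $v^n$ is. Then $f(vg)^{-1}$ is an honest torsion element of $\M(U)^{gp}$ of order exactly $n$ lifting $fg^{-1}$, and lifting generators of a cyclic decomposition of the torsion part of $P^{gp}$ in this way, combined with the splitting of the free part as in (1), yields the desired section $P \to \M(U)$. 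Without this observation your part (2) records the difficulty rather than resolving it.
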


\begin{proof} By (\cite{Ogus}, II.2.3.6) there is an isomorphism $$\M(U)/\O_X(U)^* \to \M_x/\O_{X, x}^*$$ in some neighbourhood $U$ of $x$. So $P$ would be made into a chart by a section of the monoid morphism $\M(U) \to \M(U)/\O_X(U)^*$. To study this map we consider the induced map on group completions, \begin{center} $ \xymatrix{ \M(U) \ar[r] \ar[d] & \M(U)/\O_X(U)^* \ar[d] \\ \M(U)^{gp} \ar[r] & (\M(U)/\O_X(U)^*)^{gp}.\\ } $ \end{center} Since $\M(U)$ and $\M(U)/\O_X(U)^*$ are integral monoids, the vertical arrows are injections, and a splitting of the bottom arrow splits the top arrow by restriction.

Now in case (1) it is assumed that $P^{gp} = (\M(U)/\O_X(U)^*)^{gp}$ is free abelian, so a splitting of the bottom arrow exists in this case.

Otherwise, $P^{gp}$ has some torsion part, and the problem becomes to determine that torsion elements of $P^{gp}$ come from torsion elements of $\M(U)^{gp}$. Let $fg^{-1} \in P^{gp}$ be a torsion element, with $f, g \in \M(U)$ and $f^n = ug^n$ for some $u \in \O_X(U)^*$. If $X$ is normal on $U$, then $u = (\alpha(f) / \alpha(g))^n$ has an $n\th$ root $v = \alpha(f)/\alpha(g)$ in $\O_X(U)$. Then $f^n = (vg)^n$ in $\M(U)$, so $f(vg)^{-1}$ is torsion in $\M(U)^{gp}$. Choosing a decomposition of the torsion part of $P^{gp}$ as a sum of cyclic groups and lifting generators $fg^{-1}$ for each now gives (2). \end{proof}

\begin{remark} The proof also shows that, when $X$ is a fine log scheme over a field of characteristic zero, $\M_x/\O_{X,x}^*$ is ``a chart for $(X, \M)$ in an \'etale neighbourhood of $x$,'' (which notion we have not defined here), given by extracting the roots $v = u^{1/n}$ near $x$. In characteristic $p > 0$ the same happens if the torsion order of $\M_x^{gp}/\O_{X,x}^*$ is prime to $p$.\end{remark}

\begin{example} Let $X = \Spec k[x, y, z, w]/(x^2 z - y^2 w)$ with its standard structure as a monoid algebra. On the open subset $zw \neq 0$, there are not Zariski-local good charts along the locus $x = 0, y = 0$. The obstruction is that $z/w = (x/y)^2$ is a unit here but $x/y$ is not a regular function. \end{example}

\subsection{The category of log schemes.}\label{catlogschemes} A morphism of log schemes $(X, \M_X) \to (Y, \M_Y)$ is a morphism of schemes $f: X\to Y$ with a morphism of sheaves of monoids $\M_Y \to f_* \M_X$ compatible with $\O_Y \to f_* \O_X$; that is, making a commutative diagram \begin{center} $ \xymatrix{ \M_Y \ar[r]\ar[d] & f_* \M_X\ar[d] \\ \O_Y \ar[r] & f_* \O_X\\ } $ \end{center}

Given a map of schemes $f: X \to Y$ and a (pre-)log structure $\M_Y$ on $Y$, there is a pullback log structure $\M_X = f^* \M_Y$ on $X$, which is the log structure associated to the set-theoretic pullback $f^{-1} \M_Y$ with the monoid map $f^{-1} \M_Y \to f^{-1} \O_Y \to \O_X$. Given instead a (pre-)log structure $\M_X$ on $X$ there is a pushforward log structure $\M_Y$ on $Y$. Writing still $f_*$ for the usual set-theoretic pushforward of sheaves, this is the log structure associated to the fibre product (in the category of sheaves of monoids on $Y$) of the diagram \begin{center} $ \xymatrix{ & f_* \M_X\ar[d] \\ \O_Y^* \ar[r] & f_* \O_X^* \\ } $ \end{center} 

In either case, there is an induced map of log schemes $(X, \M_X) \to (Y, \M_Y)$. Pullback and pushforward are functorial, and they are adjoint functors in the usual way. Formation of the associated log structure from a chart or other pre-log structure commutes with pullback. 

A map of log schemes $f: (X, \M_X) \to (Y, \M_Y)$ such that $\M_X \isom f^* \M_Y$ is called {\em strict}. A monoid $P$ mapping to $\O_Y$ is a chart on $Y$ if and only if the map $Y \to \Spec k[P]$ is strict. Since strictness is preserved by composition, if $(X, \M_X) \to (Y, \M_Y)$ is strict then $P$ pulls back to a chart on $X$ by $X \to Y \to \Spec k[P]$.

If $(X, \M_X)$ and $(Y, \M_Y)$ are log schemes over a base $(S, \M_S)$, they have a fibre product log scheme $$(X \times_S Y, (\M_X \oplus_{\M_S} \M_Y)^a)$$ with underlying ordinary scheme $X \times_S Y$ and log structure generated by the fibred sum of the diagram \begin{center} $ \xymatrix{ & \M_X \\ \M_Y & \M_S \ar[u] \ar[l] \\ } $ \end{center} of sheaves of monoids on $X \times_S Y$, where we have written just $\M_X,$ etc. for the pullbacks of $\M_X,$ etc. to $X \times_S Y$.

\subsection{Stratification of fine log schemes}\label{strataintro} (\cite{Ogus}, II.2.3). Let $(X, \M)$ be a fine log scheme. At any point $x$ of $X$ there is a stalk $\M_x$ of the log structure sheaf, which is a monoid in the natural way. Consider the group $$\M_x^{gp}/\O_{X, x}^* = (\M_x/\O_{X, x}^*)^{gp}.$$ This is a finitely-generated abelian group, as one may see by taking a finitely-generated chart of $\M$ near $x$. Its rank we call the {\em rank of} $\M$ at $x$. 

The rank of $\M$ is an upper-semicontinuous function on $X$. Consequently there is a (finite) stratification of $X$ by locally closed subsets $X_j$ on which $\M$ has rank $j$. In this stratification, if $Z$ is a component of $X_j$ and $Z'$ is a component of $X_k$ such that the closure $\overline{Z}$ meets $Z'$, then $Z'$ is contained in $\overline{Z}$, and $j \leq k$. 

In general, when $X$ is irreducible still $X_0$ need not be dense in $X$, since the log structure $\M$ may have elements mapping to zero in $\O_X$. Instead there is some minimum rank $r$ of $\M$ on $X$, and $X_r$ is open and dense in $X$. The complement $$X - X_r = \bigcup_{j \geq r+1} X_j$$ is a Cartier divisor on $X$, which we might call  the {\em locus} $Z(\M)$ of the log structure on $X$.

\begin{example} In the notation of Example~\ref{a2ex}, the rank zero stratum of the affine plane $X$ with its standard structure is the complement $U \cap V$ of the co-ordinate axes. On $U - V$ the rank of $\M$ is one, since the quotients $\M_p^{gp}/\O_{X,p}^*$ at each point $p \in U - V$ are copies of $\Z$, generated by the monomial $y$. Likewise on $V - U$ the rank of $\M$ is one. These punctured lines together form the rank one stratum of $(X, \M)$. At the origin $o$, which is the single point of $X - (U \cup V)$, one has $\M_o^{gp}/\O_{X,o}^* \isom \Z^2$, generated by the image of the standard chart $\N^2$. So the origin is the rank two stratum of $(X, \M)$. \end{example}

\begin{example}\label{a2nstdex} We might instead give the plane $X = \Spec k[x,y]$ a non-standard log structure $\M' \subseteq \O_X$ generated by the monomial $xy$. That is, the log structure is given by the chart $k[xy] \to k[x,y]$. The difference from the standard structure $\M$ is at the origin, where now the log structure $\M'$ has rank one. So it is the two co-ordinate axes together, including the origin of the plane, which are the rank one locus of $(X, \M')$. In particular this stratum is singular as a subscheme of $X$. \end{example}

\begin{example}\label{quadrconefibr} Consider the affine cone $X = \Spec k[x, y, z, w]/(xw-yz)$ over the quadric surface, with (non-standard) chart given by the projection $$f: \Spec k[x, y, z, w]/(xw-yz) \to \Spec k[x,y]$$ to the plane. The  rank zero stratum of $X$ is the inverse image of the rank zero stratum $xy \neq 0$ of the plane, and the inverse image of the rank one stratum $(x = 0, y \neq 0) \cup (x \neq 0, y = 0)$ of the plane is a line bundle consisting of two components $Z, Z'$ of the rank one stratum $X_1$ of $X$. Over the origin $(x, y) = (0, 0)$ the fibre $f^{-1}(0)$ of $f$ is the plane $\Spec k[z, w]$. The lines $zw = 0$, which are the closure of the components $Z, Z'$, are the rank two stratum $X_2$. The complement of $X_2$ in $f^{-1}(0)$ is another component $Z''$ of $X_1$, for on the locus $zw \neq 0$ in $X$ the monomials $x$ and $y$ are related by units. \end{example}

\begin{remark}\label{centralpoint} A point $x$ of a fine log scheme $(X, \M)$ which is in the closure of every component of every stratum is called a {\em central point} for the log scheme. When there is such, the stratum to which $x$ belongs consists of central points. If $P$ is sharp then $k[P]$ has a central point: if it is given as $\Spec k[\{x_i\}_{i \in I}]/J$ for some variables $x_i$, then the origin (where all $x_i = 0$) is central. 

A point $x$ always has a neighbourhood of which it is a central point, namely one obtained by deleting the closures $\bar{Z}$ of components of strata such that $x \not\in \bar{Z}$. While we do not need this concept explicitly in our exposition, it simplifies the local picture of the stratification and underlies some basic local results. For example, the restriction map $\M(U)/\O(U)^* \to \M_x/\O_{X, x}^*$ is an isomorphism if $x$ is a central point of $U$, as we recalled in the proof of Proposition~\ref{goodchart}. \end{remark}

\section{Log jets and log differentials}

We begin with the functorial characterisation of log jet schemes and follow with their concrete realisation in terms of log Hasse-Schmidt differentials.

\subsection{Ordinary jets and arcs.} Let $$j_m = \Spec k\llbracket t \rrbracket/(t^{m+1}),$$ for $m \geq 0$. An ordinary $k$-valued $m$-jet on $X$ is a map $j_m \to X$, and an ordinary $S$-valued $m$-jet on $X$ is a map $S \times j_m \to X$. The functor $$S \mapsto \Hom(S \times_k j_m, X)$$ is representable, and we write $J_m(X)$ for the (ordinary) {\em jet scheme} that represents it. There are natural maps $J_m(X) \to J_n(X)$ for $m \geq n$ induced by the truncation $k\llbracket t \rrbracket/(t^{m+1}) \to k\llbracket t \rrbracket/(t^{n+1})$. 

An ordinary $k$-valued arc on $X$ is a map $$j_\infty = \Spec k \llbracket t \rrbracket \to X$$ and an ordinary $S$-valued arc on $X$ is a map $S \times_k j_\infty \to X.$ There is again a scheme of such, denoted $J_\infty(X)$, and it is the projective limit of the spaces $J_m(X)$ with their truncation maps. 

\subsection{Log jets and arcs.} Now let us give $j_m$ its trivial log structure, which as a sheaf is just a copy $$k\llbracket t\rrbracket /(t^{m+1})^* = \{ a_0 + a_1 t + \cdots a_m t^m \text{ mod } t^{m+1} \text{ such that } a_0 \neq 0\}$$ of the units in $k\llbracket t\rrbracket /(t^{m+1})$ at the sole point of $j_m$. Let $S$ be a scheme, made into a log scheme through its final log structure $\O_S$. Recall from Section~\ref{catlogschemes} that there is a product log scheme of $S$ and $j_m$ in the category of log schemes over $(\Spec k, k^*)$, whose underlying ordinary scheme is $S \times_k j_m$ and whose log structure is the pushout $$\O_S \oplus_{k^*} \O_{j_m}^* \to \O_{S \times_k j_m} \isom \O_S[t]/(t^{m+1})$$ of the respective log structures fibred over $k^*.$ When $S = \Spec k$, this log structure may be described, by shifting multiplicative constants to the left factor, as the (non-fibred) product $$k \times (1 + t k\llbracket t \rrbracket/(t^{m+1}))$$ of $k$ and the principal units of $k\llbracket t \rrbracket /(t^{m+1})$, with the map to the structure sheaf of $j_m$ being just the multiplication map $$k \times (1 + t k \llbracket t \rrbracket /(t^{m+1})) \to k\llbracket t \rrbracket / (t^{m+1}).$$ We note that the image of this multiplication map consists only of the units of $k\llbracket t \rrbracket / (t^{m+1})$ together with zero. There is a similar description of the product log structure on $S \times_k j_m$ when $S$ is any affine scheme. 

In general, an $S$-valued log $m$-jet on $(X, \M_X)$ is a map of log schemes $$(S \times_k j_m, \O_S \oplus_{k^*} \O_{j_m}^*) \to (X, \M_X).$$ The functor $$S \mapsto \Hom_{log}(S\times_k j_m, X)$$ (where we have left the log structures out of the notation on the right side) is representable, and we write $J_m(X, \M_X)$ for the {\em log jet scheme} that represents it. Again there are truncation maps $J_m(X, \M_X) \to J_n(X, \M_X)$ where $m \geq n$. 

It is worth writing down explicitly the data of a $k$-valued log $m$-jet on a log scheme $(X,\M_X).$ Working locally near the image of $\Spec k\llbracket t \rrbracket/(t^{m+1}) \to X$, we may take $X = \Spec A$ with a chart $P$. A map of log schemes $$\Spec k \times_k j_m \to (X, \M_X)$$ is then equivalent to a commutative diagram 
\begin{center} $ \xymatrix{ k \times (1 + t k\llbracket t \rrbracket/(t^{m+1})) \ar[d]^{mult} & P \ar[l]\ar[d] \\ k\llbracket t \rrbracket/(t^{m+1}) & A. \ar[l]\\ } $ \end{center} Here the bottom row is the map of the underlying ordinary jet and the top row is the map on log structures.

Replacing everywhere $j_m$ by $j_\infty$, one has the definition of an $S$-valued log arc. The space of such is $J_\infty(X, \M_X)$, and it is the projective limit of the schemes $J_m(X, \M_X)$ with their truncation maps.

\begin{remark} Although when $S = \Spec k$ one has $S \times_k j_m = j_m$ as schemes, the log structure this space is given is not any of the ``obvious'' choices, and in particular is not integral and does not inject into $\O_{j_m}$. \end{remark}

\begin{remark} One might also ask after $(S, \M_S)$-valued log $m$-jets (or arcs), with different log structures $\M_S$ on $S$. These functors on the category of log schemes over $(\Spec k, k^*)$ are representable, by log schemes $(Y_m, \M_{Y_m})$. Here the underlying scheme $Y_m$ is just the scheme $J_m(X, \M_X)$ described above, considering only the structure $\O_S$ on $S$, because $S \mapsto (S, \O_S)$ is right adjoint to the forgetful functor from log schemes to schemes (because $(S, \O_S)$ is initial in the category of log schemes underlain by $S$). The log structure one obtains on $J_m(X, \M_X)$ from this construction in the category of log schemes is just that pulled back from the map $J_m(X, \M_X) \to X$. \end{remark}

\subsection{Log Hasse-Schmidt differentials.}\label{HSdiffintro} The existence of log jet schemes relative to arbitrary morphisms $(X, \M_X) \to (Y, \M_Y)$ was proven in \cite{Dutter} by construction in terms of log Hasse-Schmidt differentials, (although some cases, especially that of the log structure associated to a normal crossing divisor were considered prior to this, e.g. in \cite{N}, \cite{DL}). This construction was modelled on the construction in \cite{Vojta} of ordinary jet schemes in terms of ordinary Hasse-Schmidt differentials. For each $m \geq 0$ one constructs locally the $m\th$ (log) Hasse-Schmidt algebra, the spectrum of which is the scheme of (log) $m$-jets.

Locally, with $X = \Spec A$ is affine, one has the log Hasse-Schmidt algebra $HS^m_X(\M)$, relative to the base log scheme $(\Spec k, k^*)$, as follows. Start with the polynomial ring over $A$ on symbols $d_i f$ and $\partial_j p$, for every $f \in A, p \in \M(X)$, and $1 \leq i, j \leq m$. It is convenient to introduce notation $d_0 f = f$ for every $f \in A$ and $\partial_0 p = 1$ for every $p \in \M(X)$. Then take the quotient by the relations \begin{itemize} \item[(1)] $d_i (f + g) = d_i f + d_i g$ for $f, g \in A$,
\item[(2)] $d_i c = 0$ for $c \in k$ and $i \geq 1$,
\item[(3)] $d_i \alpha(p) = \alpha(p) \partial_i p$ for $p \in \M(X)$, and 
\item[(4)] the ordinary and ``logarithmic'' Leibniz rules $$d_k (fg) = \sum_{\stackrel{0 \leq i, j \leq k}{i + j = k}} d_i f d_j g$$ for $f, g \in A$ and $$\partial_k (pq) = \sum_{\stackrel{0\leq i, j \leq k}{i+j = k}} \partial_i p \partial_j q$$ for $p, q \in \M(X)$. \end{itemize}

Note that in the logarithmic Leibniz rule the terms $\partial_k p$ and $\partial_k q$ appear on the right side with co-efficients $1 = \partial_0 q = \partial_0 p$. In particular the case $k = 1$ asserts that $$\partial_1(pq) = \partial_1 p + \partial_1 q.$$ One thinks of $d_i$ as like a divided differential $\displaystyle \frac{1}{i!} d^i$, and $\partial_1$ as the logarithmic differential $d \log$, and $\partial_i$ for $i \geq 2$ not as repeated logarithmic differentiation but as the differential $\partial_i p = (d_i \alpha(p))/\alpha(p)$ with a logarithmic pole along the locus $\alpha(p) = 0$.

One sees, using the ordinary and ``logarthmic'' quotient rules, that this construction localises properly, so that the spectra of the locally constructed $m\th$ log Hasse-Schmidt algebras on some $(X, \M)$ glue to a global object, which is the space of log $m$-jets. In terms of the $k$-valued log jets, the connection between the points $HS^m_X(\M) \to k$ of the log Hasse-Schmidt algebra and morphisms $P \to k \times (1 + t k\llbracket t \rrbracket /(t^{m+1})$ is that in the latter the first component corresponds to values for the elements $\alpha(p) = d_0 \alpha(p)$, and the co-efficients of the series in the second component correspond to values for $\partial_i p$.


\begin{remark}\label{logleibnizeqns} As above, one has $$\partial_1 \left(\prod p_j^{e_j}\right) = \sum e_j \partial_1 p_j.$$ For $k \geq 2$ one does not have the same identity, but may still write $$\partial_k \left(\prod p_j^{e_j} \right)= \sum e_j \partial_i p_j + G_k$$ for some universal polynomial $G_k$ in the lower-order differentials $\partial_i p_j$ with $i < k$.\end{remark}

\begin{remark} In general one may not replace $\M(X)$ by an arbitrary chart for $\M$ in the construction of $HS^m_X(\M)$. For example, in Example~\ref{a1twocharts} when $m = 1$ the chart $\N^2$ gives both log differentials $\partial_1 x$ and $\partial_1 (x-1)$, while the chart $\N$ gives only their sum $\partial_1 (x(x-1)) = \partial_1 x + \partial_1 (x-1)$. However, charts will do for computing the Hasse-Schmidt algebra locally on $X$. \end{remark}

\begin{remark} So far we have preferred multiplicative notation for our monoids, since in many cases we think of them just as sub-monoid sheaves of $(\O_X, \cdot)$. The log derivative $\partial = \partial_1$ gives an isomorphism from an integral monoid written multiplicatively to the same monoid written additively. Where we interpret the monoid written multiplicatively in terms of monomials, this interprets the monoid written additively as a submonoid of its space of first-order log differentials. \end{remark}

\subsection{Log jets, ordinary jets, and strata.} Every log jet on $(X, \M)$ has an underlying ordinary jet, obtained by ``forgetting'' the map on log structures. That is, there is a natural map $$J_m(X, \M) \to J_m(X)$$ of schemes over $X$. (It is the same as the map induced on log jet spaces by the canonical log scheme morphism $(X, \M) \to (X, \O_X^*)$.) Typically this map is neither surjective nor injective: not every ordinary jet underlies a log jet, and those that do may become log jets in more than one way.

When $(X, \M)$ is fine, and further either $\chr k = 0$ or $(X,\M)$ is also saturated, there is a simple abstract description of the natural map $J_m(X, \M) \to J_m(X)$ in terms of the stratification of $X$ introduced in Section~\ref{strataintro}; namely, on each stratum it is an affine bundle map. To see this, we may work locally on $X$, and suppose that $X = \Spec A$ has a chart by a fine monoid $P$. Recall that the data of a $k$-valued log $m$-jet on $X$ is then a diagram \begin{center} $\xymatrix{ k \times (1+ t k \llbracket t \rrbracket / (t^{m+1})) \ar[d] & P \ar[l]\ar[d] \\ k\llbracket t \rrbracket / (t^{m+1}) & A \ar[l] \\ } $ \end{center} where the top arrow is a monoid morphism and the bottom arrow corresponds to the underlying ordinary jet $\gamma: j_m \to X$. For simplicity of exposition we will think of $\gamma$ as having a fixed closed point $x$ as its image. But our argument following actually makes sense for the generic point $\xi$ of the component of a stratum in which $x$ lies.

The prime ideal $I$ of $P$ consisting of elements which are not units under $P \to \O_{X,x}$ must map to pairs in $k \times (1 + t k\llbracket t\rrbracket / (t^{m+1})$ which are not units, that is, pairs whose first component is zero. So, the underlying ordinary jet lies inside the closed subscheme of $X$ cut out by the elements $\alpha(I) \subseteq \alpha(P)$ which vanish at $x$, and the additional data of the log jet is the map to the second factor $1 + t k\llbracket t \rrbracket / (t^{m+1}).$

If the chart $P$ is sharp at $x$ (in the sense that $I = P - \{1\}$, for example, if $P$ is good near $x$), then this additional data may be specified arbitrarily. That is, a log $m$-jet at $x$ is just a pair consisting of an ordinary $m$-jet $\gamma$ at $x$ which lies (generically) inside the stratum to which $x$ belongs together with a map $P \to 1 + t k \llbracket t \rrbracket / (t^{m+1})$. Since the latter data is equivalent to a {\em group} morphism $$P^{gp} \to 1 + t k\llbracket t \rrbracket / (t^{m+1})$$ it is parametrised by affine space, and the claim follows.

Slightly more generally, for a map of log schemes $j_m \to X$ with given underlying ordinary jet $\gamma$, the map on log structure sheaves is exactly a monoid morphism $$\phi: \M_x \to k \times (1 + t k\llbracket t \rrbracket / (t^{m+1}))$$ which annihilates the maximal proper ideal $\M_x - \M_x^*$ of $\M_x$ on composition with the map to $k \llbracket t \rrbracket/(t^{m+1})$ and is $\O_{X,x}^*$-equivariant (where $\O_{X, x}^*$ acts on the target through $\gamma$). Since the target of $\phi$ is a group it is the same to replace $\M_x$ by $\M_x^{gp}$. Assume either that $k$ has characteristic zero, or that $k$ has characteristic $p > 0$ and $\M_x^{gp}/\O_{X,x}^*$ has no $p$-torsion. Then the torsion part $T$ of $\M_x^{gp}/\O_{X,x}^*$ has order relatively prime to the order of the torsion part of $1 + t k \llbracket t \rrbracket/(t^{m+1})$, so a map $$\overline{\phi}: \M_x^{gp}/\O_{X,x}^* \to 1 + t k \llbracket t \rrbracket / (t^{m+1})$$ factors through the quotient by $T$, hence uniquely determines a map $\phi$ given a chosen splitting of $\M_x^{gp} \to (\M_x^{gp}/\O_{X,x}^*)/T$. Now $\overline{\phi}$ is arbitrary, and parametrised by affine space $\A^{mj}$, where $j = \rank \M_x^{gp}/\O_{X, x}^*$ is the rank of $\M$ at $x$. 

\begin{prop}\label{logjetbundle} (\cite{KS} 3.2) Let $(X, \M)$ be a fine log scheme and, for $m \geq 0$ or $m = \infty$, write $J_m(X, \M)_j$ for the space of log $m$-jets or log arcs over the rank $j$ stratum $X_j$. Then: \begin{itemize} \item[(1)] If $\chr k = 0$ or $\chr k = p$ and $\M^{gp}/\O_X^*$ has no $p$-torsion on $X_j$, the natural map $J_m(X, \M)_j \to J_m(X_j)$ is an affine bundle map with fibre $\A^{mj}$. \item[(2)] The natural map $J_\infty(X, \M)_j \to J_\infty(X_j)$ is an affine bundle map with fibre the space of maps $\Z^j \to 1 + t k\llbracket t \rrbracket.$ \end{itemize} \end{prop}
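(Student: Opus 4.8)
\emph{Proof proposal.} Almost all of the content is already present in the discussion preceding the statement; what remains is to organise it, make it uniform in the test scheme $S$, and extract the global bundle structure from the fibrewise analysis. The plan is as follows.

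First I would reduce to a local situation. The assertion is local on $X$, hence on the stratum $X_j$, so I may assume $X = \Spec A$ is affine with a chart $P \to \M(X)$ by a fine monoid, shrinking $X$ if necessary. By the functorial description of log jets together with the explicit diagram for a log $m$-jet, an $S$-valued log $m$-jet on $(X, \M)$ is a pair $(\gamma, \phi)$ consisting of an $S$-valued ordinary $m$-jet $\gamma$ on $X$ and a monoid morphism $\phi$ on log structures making the square commute; the natural map $J_m(X, \M)_j \to J_m(X_j)$ sends $(\gamma, \phi) \mapsto \gamma$, over the locus where $\gamma$ factors through $X_j$ (which, as noted before the statement, is forced by the ideal of non-units of $\M$ mapping to non-units).

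Second, I would compute the fibre over a fixed $\gamma$. Pulling $\phi$ back along $\gamma$, the ideal $\M_x - \M_x^*$ is forced into the non-units of $k \times (1 + t k\llbracket t \rrbracket/(t^{m+1}))$, and $\O_{X,x}^*$-equivariance pins $\phi$ down on units; so the remaining freedom is precisely a group homomorphism $\overline{\phi}\colon \M_x^{gp}/\O_{X,x}^* \to 1 + t k\llbracket t \rrbracket/(t^{m+1})$, once a bounded non-canonical splitting of $\M_x^{gp} \to (\M_x^{gp}/\O_{X,x}^*)/T$ is chosen as in the discussion above. I would run this at the generic point of a component of $X_j$, where $\overline{\M} = \M/\O^*$ is constant, rather than at a closed point, so that the description is uniform along (a component of) the stratum.

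Third comes the torsion step, which is where (1) and (2) diverge. The group scheme $1 + t k\llbracket t \rrbracket/(t^{m+1})$ is $\A^m$ with a polynomial group law, and its torsion subgroup is a $p$-group when $\chr k = p$ and trivial when $\chr k = 0$; so under the hypothesis of (1) the torsion subgroup $T$ of $\M^{gp}/\O_X^*$ has order prime to that of the target, $\Hom(T, -)$ into it vanishes, $\overline{\phi}$ factors through $\Z^j$, and the fibre becomes $\Hom(\Z^j, \A^m) \cong \A^{mj}$, functorially in $S$ and compatibly with the group law. For (2) one replaces $j_m$ by $j_\infty$; the group $1 + t k\llbracket t \rrbracket$ is torsion-free in every characteristic, so $T$ dies in any homomorphism with no hypothesis needed, and the fibre is $\Hom(\Z^j, 1 + t k\llbracket t \rrbracket)$ as claimed. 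I expect the only genuinely delicate point to be the passage from this fibrewise (and $k$-point) computation to the global bundle assertion: one must know that the affine structure on the fibres is algebraic and locally trivial, which rests on $\overline{\M}$ being locally constant along $X_j$, so that after passing to a chart by the constant monoid $\Z^j \oplus T$ — Zariski-locally where good charts exist, in the sense of Proposition~\ref{goodchart}, and otherwise \'etale-locally — the bundle trivialises, the bounded splitting choice being harmless. All of this is routine once set up; compare \cite{KS} 3.2.
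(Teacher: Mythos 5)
Your proposal is correct and follows essentially the same route as the paper: the paper's proof of Proposition~\ref{logjetbundle} consists precisely of the discussion preceding the statement (reduction to a chart, the ideal of non-units forcing the underlying jet into the stratum, the residual data being a group homomorphism $\overline{\phi}: \M_x^{gp}/\O_{X,x}^* \to 1 + t k\llbracket t\rrbracket/(t^{m+1})$ killed on torsion under the stated hypotheses), plus the one-line observation that $1 + t k\llbracket t\rrbracket$ is torsion-free so no hypothesis is needed in (2). Your additional care about working at the generic point of a stratum component and about local triviality of the bundle only makes explicit what the paper itself signals in passing.
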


\begin{proof} The above discussion, which is the case $m \geq 0$, applies with the usual notational changes to the case $m = \infty$. In this case $1 + t k \llbracket t \rrbracket$ has no torsion, independent of the characteristic of $k$, so we do not need an additional hypothesis in (2). \end{proof}

\begin{corollary}\label{logjetdimcount} Assume the notation and hypotheses of Proposition~\ref{logjetbundle}(1). Then $$\dim J_m(X, \M)_j = \dim J_m(X_j) + mj,$$ and, writing $\dim J_m(X, \M)_j^{sm}$ for the log $m$-jets over the smooth locus of $X_j$, $$\dim J_m(X, \M)_j^{sm} = (m+1)(j + \dim X_j) - j.$$ \qed \end{corollary}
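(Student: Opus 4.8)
The plan is to read off both equalities from Proposition~\ref{logjetbundle}(1) combined with the elementary dimension count for ordinary jet schemes of a smooth variety.

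For the first formula I would argue directly from the bundle structure: Proposition~\ref{logjetbundle}(1) says the natural map $J_m(X,\M)_j \to J_m(X_j)$ is an affine bundle with fibre $\A^{mj}$, and an affine bundle of rank $mj$ over a finite-type base raises dimension by exactly $mj$, so $\dim J_m(X, \M)_j = \dim J_m(X_j) + mj$. (When $X_j$ is reducible one reads all dimensions as the maximum over components; the argument is then applied componentwise.)

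For the second formula I would restrict to the smooth locus $X_j^{sm} \subseteq X_j$, which is open and dense and hence has dimension $d := \dim X_j$. The input needed is the standard fact that for a smooth $d$-dimensional variety $Y$ the ordinary jet scheme $J_m(Y)$ is a Zariski-locally trivial $\A^{md}$-bundle over $Y$: \'etale-locally $Y$ is $\A^d$, jet schemes commute with \'etale base change, and $J_m(\A^d) \cong \A^{(m+1)d}$; hence $\dim J_m(X_j^{sm}) = (m+1)d$. Feeding this into the affine bundle of Proposition~\ref{logjetbundle}(1) gives $\dim J_m(X, \M)_j^{sm} = (m+1)d + mj$, and since $(m+1)d + mj = (m+1)d + (m+1)j - j = (m+1)(j + d) - j$ this is the asserted value.

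There is really no serious obstacle here; the only points that merit a word are that an affine bundle with fibre $\A^N$ genuinely forces the dimension to jump by $N$ (rather than by something smaller), and the reduction to the smooth, hence \'etale-locally affine, case, which is what lets one compute $J_m$ of the stratum explicitly. Both are routine, so the proof will be short.
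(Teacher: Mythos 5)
Your argument is correct and is exactly the intended one: the paper states this corollary with no written proof (just a \qed), taking it to follow immediately from the affine bundle structure of Proposition~\ref{logjetbundle}(1) together with the standard computation $\dim J_m(Y) = (m+1)\dim Y$ for smooth $Y$, which is precisely what you spell out. The algebra $(m+1)\dim X_j + mj = (m+1)(j+\dim X_j) - j$ checks out, so nothing is missing.
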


\begin{remark}\label{positivecharbundle} In the case where $\chr k = p$ and $\M_x^{gp}/\O_{X,x}^*$ has a $p$-power-torsion subgroup $T_p$, there is in addition to $\overline{\phi}$ some data to a log $m$-jet given by a map $T_p \to 1+ t k\llbracket t \rrbracket / (t^{m+1})$. When $m$ is large enough compared to $n$ the truncation map $J_m(X, \M)_j \to J_n(X, \M)_j$ has in its image only log $n$-jets corresponding to the trivial map on $T_p$, because under the map $$1 + t k\llbracket t \rrbracket / (t^{m+1}) \to 1 + t k\llbracket t \rrbracket / (t^{n+1})$$ if $m-n \geq p$ an element not $1$ has its torsion order decreased. This gives an alternate explanation of Proposition~\ref{logjetbundle}(2) in the case of positive characteristic. \end{remark} 

\begin{example} Here is an illustration of this in terms of differentials in a simple case. Let $(X, \M)$ be the affine line $\Spec k[x]$ with its standard structure. Both $J_1(X, \M)$ and $J_1(X)$ are trivial $\A^1$-bundles over $X$. The former we may give co-ordinates $d_0 x = x$ and $\partial_1 x$, and the latter, $d_0 x$ and $d_1 x = d_0 x \cdot \partial_1 x$. In other words, the map $J_1(X, \M_X) \to J_1(X)$ here is one chart $$k[d_0 x, d_1 x] = k[d_0 x, d_0 x \cdot \partial_1 x] \hookrightarrow k[d_0 x, \partial_1 x]$$ of the blow-up of an affine plane at its origin, (which is the zero jet at $x = 0$). All of the log jets at $x = 0$ map to the zero jet in $J_1(X)$.

Similar takes place for the log $m$-jets of any monoid algebra $X = \Spec k[P]$ with its standard structure, because the monoid morphism $P \to k \times (1 + t k \llbracket t \rrbracket / (t^{m+1}))$ giving the map on log structures alone already determines the underlying ordinary jet $k[P] \to k \llbracket t \rrbracket / (t^{m+1})$. \end{example} 

\section{Log \'etale maps}

The notions of unramified, smooth, and \'etale maps in the log category may be defined by infinitesimal lifting properties in the same way as in the ordinary scheme-theoretic sense. That is, $(X, \M_X) \to (Y, \M_Y)$ is log unramified, resp. log smooth, resp. log \'etale if whenever one has a diagram \begin{center} $ \xymatrix{ (T, \M_T) \ar[r] \ar[d] & (X, \M_X) \ar[d] \\ (T', \M_{T'}) \ar[r] \ar@{.>}[ur] & (Y, \M_Y) \\ } $ \end{center} where $(T, \M_T) \to (T', \M_{T'})$ is an infinitesimal thickening, locally on $T'$ there is at most one, resp. at least one, resp. exactly one lift $(T', \M_{T'}) \to (X, \M_X)$ making a commutative diagram. 

What one must supply to make this work is the correct notion of an infinitesimal thickening of log schemes. Following Kato \cite{Kato} we say that $i: (T, \M_T) \to (T', \M_{T'})$ is a log infinitesimal thickening if (1) the underlying map $T \to T'$ of schemes is a an infinitesimal thickening in the ordinary sense, and (2) the map $i$ is strict, that is, $\M_T = i^* \M_{T'}$.

\begin{remark} The strictness condition (2) appears already in the notion of a closed embedding of log schemes \cite{Kato}.\end{remark}

\begin{prop}\label{jetthickening} For all $0 \leq n \leq m$ the truncation maps $$\pi^m_n: (S \times_k j_n,\O_S \oplus_{k^*} \O_{j_n}^*)  \to (S \times_k j_m, \O_S \oplus_{k^*} \O_{j_m}^*)$$ are log infinitesimal thickenings. \qed \end{prop}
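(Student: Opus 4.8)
The plan is to verify directly the two conditions in the definition of a log infinitesimal thickening: that the underlying morphism of schemes $S\times_k j_n\to S\times_k j_m$ is an ordinary infinitesimal thickening, and that $\pi^m_n$ is strict. The first is elementary. Since the statement is local on $S$, I may take $S$ affine; then the truncation $k\llbracket t\rrbracket/(t^{m+1})\to k\llbracket t\rrbracket/(t^{n+1})$ is surjective with kernel $(t^{n+1})$, which is nilpotent, and tensoring over $k$ with $\O_S(S)$ preserves both surjectivity and nilpotence of the kernel. Hence $S\times_k j_n\to S\times_k j_m$ is a closed immersion cut out by a nilpotent ideal sheaf, that is, an infinitesimal thickening in the ordinary sense.

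For strictness, the key observation is that the product log structure $\O_S\oplus_{k^*}\O_{j_m}^*$ on $S\times_k j_m$ is nothing but the pullback $\mathrm{pr}_m^*\O_S$ of the (final) log structure of $S$ along the projection $\mathrm{pr}_m\colon S\times_k j_m\to S$. Indeed, by the description in Section~\ref{catlogschemes} this log structure is the one associated to the fibred sum, taken in sheaves of monoids, of the pullbacks to $S\times_k j_m$ of the log structures $\O_S$ on $S$, $k^*$ on $\Spec k$, and $\O_{j_m}^*$ on $j_m$; but $k^*$ and $\O_{j_m}^*$ are the initial (trivial) log structures, and the pullback functor on log structures, being a left adjoint, preserves the initial object, so these two pullbacks are the trivial log structure $\O^*_{S\times_k j_m}$ and the map between them is the identity. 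Thus the fibred sum collapses to $\mathrm{pr}_m^*\O_S$, which is already a log structure, so no further passage to an associated log structure is needed. The identical computation with $j_n$ in place of $j_m$ gives $\O_S\oplus_{k^*}\O_{j_n}^*=\mathrm{pr}_n^*\O_S$ for the projection $\mathrm{pr}_n\colon S\times_k j_n\to S$. Since $\mathrm{pr}_m\circ\pi^m_n=\mathrm{pr}_n$, functoriality of pullback now gives
$$(\pi^m_n)^*\bigl(\O_S\oplus_{k^*}\O_{j_m}^*\bigr)=(\pi^m_n)^*\,\mathrm{pr}_m^*\O_S=\mathrm{pr}_n^*\O_S=\O_S\oplus_{k^*}\O_{j_n}^*,$$
which is precisely the statement that $\pi^m_n$ is strict.

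None of this is difficult, and the one place that calls for a little care is the strictness step — specifically, reading off the fibred-sum description of the fibre-product log structure from Section~\ref{catlogschemes} and recognising that its two ``trivial'' legs make it degenerate to a pullback from $S$. An equivalent route, which one might prefer, is to observe that $\pi^m_n$ is the base change of the truncation $i\colon(j_n,\O_{j_n}^*)\to(j_m,\O_{j_m}^*)$ along $(S\times_k j_m,\O_S\oplus_{k^*}\O_{j_m}^*)\to(j_m,\O_{j_m}^*)$ — legitimately, since $(j_n,\O_{j_n}^*)\to(\Spec k,k^*)$ factors through $i$ — that $i$ is strict because the pullback of a trivial log structure is trivial, and that strictness of a log morphism is stable under base change (the term $p^*\M_X=p^*f^*\M_Y$ in the fibre-product log structure already agrees with the pullback from the base, so the fibred sum reduces to the pullback of $\M_{Y'}$).
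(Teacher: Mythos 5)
Your proof is correct and follows essentially the same route as the paper: condition (1) is elementary, and strictness holds because the $j_m$-factor contributes only trivially to the product log structure, so everything is pulled back from $S$ and is unaffected by truncation. You spell out in more detail what the paper compresses into the single observation that $\O_{j_m}^*$ restricted to $j_n$ generates the trivial structure $\O_{j_n}^*$.
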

\begin{proof} Obviously condition (1) above is satisfied. The condition (2) is that $\O_S \oplus_{k^*} \O_{j_m}^*$ as a monoid sheaf on $S \times_k j_n$ generates the log structure $\O_S \oplus_{k^*} \O_{j_n}^*$. This is immediate because the sheaf $\O_{j_m}^*$ on $j_n$ generates the trivial structure $\O_{j_n}^*$. \end{proof}

Our special interest in log \'etale maps arises from the following proposition, which is a formal analogue of the same fact for ordinary \'etale maps and ordinary jet and arc spaces.

\begin{prop}\label{etalebasechg1} Let $f: (X, \M_X) \to (Y, \M_Y)$ be a log \'etale map of log schemes. Then the induced maps $f_m: J_m(X, \M_X) \to J_m(Y, \M_Y)$ for $m \geq 0$ or $m = \infty$ make the diagram \begin{center} $ \xymatrix{ J_m(X, \M_X) \ar[r] \ar[d] & J_m(Y, \M_Y) \ar[d]\\ X \ar[r] & Y\\ } $ \end{center} a fibre square.  \end{prop}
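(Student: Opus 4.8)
The plan is to verify the claimed fibre square on the level of representable functors, i.e.\ on $S$-points, using the infinitesimal lifting characterisation of log \'etaleness together with Proposition~\ref{jetthickening}.

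First I would record two identifications. The log $0$-jet scheme $J_0(X,\M_X)$ is just $X$: since $j_0 = \Spec k$ with its trivial log structure, the log scheme $(S\times_k j_0,\ \O_S\oplus_{k^*}\O_{j_0}^*)$ is $(S,\O_S)$ with the final log structure, and a morphism $(S,\O_S)\to (X,\M_X)$ is the same datum as a morphism $S\to X$ of schemes (the map on log structures being forced to be the composite $\M_X\to\O_X\to f_*\O_S$ for the underlying $f\colon S\to X$). Likewise the structure projection $J_m(X,\M_X)\to X$ is the map induced by the truncation $\pi^m_0$; concretely it sends an $S$-valued log $m$-jet $\gamma$ to its base point, the underlying log $0$-jet $\gamma\circ\pi^m_0$. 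The same holds over $Y$, and $f_m$ sits in the square of the statement because the base point of $f\circ\gamma$ is $f$ composed with the base point of $\gamma$.

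Next I would compare $S$-points. An $S$-point of $J_m(Y,\M_Y)\times_Y X$ is a pair $(g,h)$, with $g\colon (S\times_k j_m,\ \O_S\oplus_{k^*}\O_{j_m}^*)\to (Y,\M_Y)$ a log $m$-jet and $h\colon S\to X$ a morphism, such that $f\circ h$ equals the base point of $g$. Writing $\tilde h\colon (S,\O_S)\to (X,\M_X)$ for the log morphism attached to $h$, this compatibility says exactly that $f\circ\tilde h = g\circ\pi^m_0$, so $(g,\tilde h)$ fit into a commutative square with $\pi^m_0$ on the left and $f$ on the right. By Proposition~\ref{jetthickening} (case $n=0$) the map $\pi^m_0$ is a log infinitesimal thickening, and $f$ is log \'etale, so locally on $S\times_k j_m$ there is a unique lift $\gamma$ with $f\circ\gamma=g$ and $\gamma\circ\pi^m_0=\tilde h$. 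Local uniqueness forces the local lifts to agree on overlaps, so they glue to a single global $\gamma$, which is then the unique log $m$-jet on $(X,\M_X)$ lying over $(g,h)$. This gives a bijection, natural in $S$, between $J_m(X,\M_X)(S)$ and $\big(J_m(Y,\M_Y)\times_Y X\big)(S)$; hence the square is cartesian for every finite $m$. For $m=\infty$ one passes to the limit: $J_\infty$ is the projective limit of the $J_m$ along the truncation maps, and fibre products commute with such limits, so $J_\infty(X,\M_X)=\varprojlim_m J_m(X,\M_X)=\varprojlim_m\big(J_m(Y,\M_Y)\times_Y X\big)=J_\infty(Y,\M_Y)\times_Y X$, compatibly with the projections.

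I expect the only genuinely delicate point to be the passage from the ``locally on $T'$'' lifting property to a genuine global morphism $\gamma$: one must observe that the local lifts, being each unique, automatically agree on overlaps and so glue. The remaining verifications --- that $(S,\O_S)$ really carries the final log structure, so that the comparison square commutes on the nose (log-structure maps included), and that $\pi^m_0$ is exactly the thickening of Proposition~\ref{jetthickening} --- are routine.
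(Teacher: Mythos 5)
Your proposal is correct and follows essentially the same route as the paper: identify $S$-points of the fibre product with lifting squares against the log infinitesimal thickening $\pi^m_0$ (Proposition~\ref{jetthickening}), apply the unique-lifting characterisation of log \'etaleness, and pass to the projective limit for $m=\infty$. You are somewhat more explicit than the paper about gluing the local lifts and about why $J_0(X,\M_X)=X$, but these are elaborations of the same argument, not a different one.
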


\begin{proof} For $m \geq 0$ this follows from the functorial characterisation of log jet spaces and the definition of log \'etale maps. More precisely, an $S$-point of $X \times_Y J_m(Y, \M_Y)$ is naturally a diagram \begin{center} $ \xymatrix{S \ar[r] \ar[d] & X \ar[d] \\ S\times_k j_m \ar[r] & Y \\ } $\end{center} (where we have omitted the log structures from the notation) because of the natural correspondence $$\Hom(S, J_m(Y, \M_Y)) = \Hom_{log}(S \times_k j_m, Y).$$ But if $X \to Y$ is log \'etale then such diagrams biject naturally with lifts $S \times_k j_m \to X$, which are the same things as $S$-points of $J_m(X, \M_X)$. 

Given this, the assertion of the proposition in the case $m = \infty$ follows on taking the projective limit of the maps on log jet spaces.\end{proof}



In order to apply such a lemma it is a convenience to have more practical criteria for log \'etaleness. The one we shall need is the observation that the ordinary \'etale maps which are also log \'etale are the strict ones:

\begin{prop}\label{ordetalestrict} (\cite{Kato} 3.8) Let $f: (X, \M_X) \to (Y, \M_Y)$ be a morphism of log schemes whose underlying map $X \to Y$ is \'etale. Then $f$ is log \'etale if and only if $f$ is strict (i.e., $f^* \M_Y \isom \M_X)$. \end{prop}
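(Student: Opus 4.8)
The plan is to treat the two directions separately. The ``if'' direction should be formal: given strictness, a log lifting problem reduces at once to the corresponding ordinary lifting problem, which is solved by ordinary \'etaleness of the underlying map. The ``only if'' direction I would reduce, via a factorisation, to the case where the underlying scheme map is an isomorphism, and then appeal to the local structure theory of log smooth morphisms.

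So first suppose $f$ is strict, and write $\bar f\colon X\to Y$ for the underlying \'etale morphism, so that $\M_X\isom\bar f^*\M_Y$. The fact to isolate is a universal property of a strict morphism: for any log scheme $(T',\M_{T'})$ with a morphism to $(Y,\M_Y)$, a log morphism $(T',\M_{T'})\to(X,\M_X)$ over $(Y,\M_Y)$ is the same datum as a scheme morphism $T'\to X$ over $Y$. Indeed, a log morphism lifting a scheme morphism $h\colon T'\to X$ over $Y$ is a morphism of log structures $h^*\M_X\to\M_{T'}$; but $h^*\M_X\isom h^*\bar f^*\M_Y\isom(\bar f\circ h)^*\M_Y$ because pullback of log structures is functorial and commutes with forming associated log structures, and strictness then forces this morphism to factor in exactly one way through the log part $(\bar f\circ h)^*\M_Y\to\M_{T'}$ of the given morphism $T'\to(Y,\M_Y)$. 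Granting this, let $i\colon(T,\M_T)\to(T',\M_{T'})$ be a log infinitesimal thickening fitting into a square over $f$. Its underlying map is an ordinary infinitesimal thickening, so as $\bar f$ is \'etale there is a unique scheme lift $T'\to X$ over $Y$; the universal property promotes it to a unique log lift $(T',\M_{T'})\to(X,\M_X)$ over $(Y,\M_Y)$, and the same uniqueness, now applied over $(T,\M_T)$, forces the restriction of this lift along $i$ to coincide with the given morphism $(T,\M_T)\to(X,\M_X)$. So $f$ is log \'etale.

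For the converse, factor $f$ as
\[(X,\M_X)\ \xrightarrow{\ g\ }\ (X,\bar f^*\M_Y)\ \xrightarrow{\ f'\ }\ (Y,\M_Y),\]
where $f'$ is the tautologically strict morphism and $g$ has underlying map $\Id_X$ and log part the structure morphism $\theta\colon\bar f^*\M_Y\to\M_X$. Since $\bar f$ is \'etale, $f'$ is log \'etale by the case just treated, hence log unramified; and $f=f'\circ g$ is log \'etale by hypothesis; so $g$ is log \'etale by the usual cancellation property (it factors as a section of a base change of $f'$, which is an open immersion since $f'$ is log unramified, followed by a base change of $f=f'\circ g$, which is log \'etale). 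It then remains to show that a log \'etale morphism whose underlying scheme morphism is an isomorphism must itself be an isomorphism of log schemes, i.e.\ that $\theta$ is an isomorphism; and since $\theta$ is automatically an isomorphism on units, what is really at stake is that it be an isomorphism on the characteristic monoids $\overline{\M}=\M/\O_X^*$.

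I expect this last point to be the main obstacle, and the only place where something beyond formal manipulation of lifting properties is required. I would settle it with Kato's local structure theorem for log smooth morphisms (\cite{Kato} 3.5): \'etale-locally $g$ is a strict \'etale base change of the morphism $\Spec\Z[P]\to\Spec\Z[Q]$ attached to an injective homomorphism of fine monoids $Q\to P$ for which $Q^{gp}\to P^{gp}$ has finite kernel and finite cokernel (the cokernel finite precisely because $g$ is log \'etale, hence of relative dimension zero). The requirement that the underlying scheme morphism of $g$ be an isomorphism then forces $\Spec\Z[P]\to\Spec\Z[Q]$ to become an isomorphism after the relevant base change, and this pins $\theta$ down to an isomorphism. (One could instead compute log K\"ahler differentials, getting $\Omega^1_{(X,\M_X)/(X,\bar f^*\M_Y)}\isom\operatorname{coker}(\theta^{gp})\otimes_\Z\O_X$, so that log unramifiedness already forces $\operatorname{coker}(\theta^{gp})$ to be a torsion sheaf; but upgrading ``torsion'' to ``zero'' is exactly the point where one must use that the underlying morphism is \'etale rather than merely flat, which is what the structure theorem supplies.) This is \cite{Kato} 3.8.
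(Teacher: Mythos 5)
Your ``if'' direction is the paper's argument, written out: strictness makes the log part of any morphism to $(X,\M_X)$ over $(Y,\M_Y)$ canonically determined by the underlying scheme map, so the log lifting problem collapses to the ordinary one, which \'etaleness of $X\to Y$ solves; your check that the resulting lift also restricts correctly along the thickening is a point the paper leaves implicit. Where you genuinely diverge is the converse. The paper dispatches it in one clause (``the map on log structures is uniquely determined \dots\ if and only if $f$ is strict''), which for the hard implication is an assertion rather than an argument --- non-strictness does not by itself produce a thickening on which the log part of the lift fails. Your route (factor $f$ through the strict pullback $(X,\bar f^{*}\M_Y)$, deduce by the graph/diagonal cancellation that the residual morphism $g$ over $\Id_X$ is log \'etale, then show that a log \'etale morphism over an isomorphism of schemes is strict) is a legitimate, more structural proof, at the price of importing fibre products of fine log schemes, the open-immersion property of the diagonal of a log unramified map, and Kato's chart criterion, none of which the paper develops. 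One caveat on your final step: the base change of $\Spec\Z[P]\to\Spec\Z[Q]$ does \emph{not} in general become an isomorphism --- $X$ is only an open subscheme of it (e.g.\ for a Kummer chart landing in units the base change is a disjoint union of copies of $X$) --- and even on that open piece one must still argue that \'etaleness over the fibre product forces every element of $P$ outside the face generated by $Q$ to become a unit, which is what actually identifies the characteristic monoids. You rightly flag this as the crux, and rightly note that $\Omega^1=0$ alone cannot deliver it, but as written the step is a pointer to \cite{Kato}~3.5 rather than a proof; since the proposition is itself quoted from Kato, that is a defensible place to stop.
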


\begin{proof} In an infinitesimal lifting situation one has a unique lift of the underlying map on schemes, and therefore a unique lift as log schemes if and only if the map on log structures is uniquely determined. This happens if and only if $f$ is strict. \end{proof}

\section{Log arcs for monoid algebras}

We briefly consider the case of monoid algebras. The situation here is as simple as it could be: if a monoid algebra is irreducible, so is its log arc scheme.

\begin{prop}\label{monoidalgebra} Let $P$ be a monoid, not necessarily finitely generated or integral, $X = \Spec k[P]$ its monoid algebra, with the standard log structure $\M$ generated by $P$, and $P^{gp}$ the group completion of $P$. Then: \begin{itemize} \item[(1)] For $m \geq 0$ or $m = \infty$, the $k$-rational points of $J_m(X, \M)$ are the trivial bundle over $X$ with fiber the space of maps $P^{gp} \to 1 + t k\llbracket t\rrbracket/(t^{m+1})$.
\item[(2)] Assume $m = \infty$, or else $m\geq 0$ and either $\chr k = 0$ or $\chr k = p$ and $P$ is $p$-power-saturated, (that is, if $f \in P^{gp}$ has $f^p \in P^{int}$ then in fact $f \in P^{int}$). If $P^{gp}$ has finite rank, the fiber of $J_m(X,\M) \to X$ is irreducible, hence $J_m(X, \M)$ is irreducible if $X$ is. \end{itemize} \end{prop}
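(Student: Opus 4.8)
The plan is to analyze the data of a log $m$-jet on $X = \Spec k[P]$ directly from the functorial description recalled in Section~4, exploiting the feature special to monoid algebras that the map on log structures \emph{determines} the underlying ordinary jet. Concretely, a $k$-valued log $m$-jet on $(X,\M)$ is a commutative diagram with top arrow a monoid morphism $P \to k \times (1 + t\,k\llbracket t\rrbracket/(t^{m+1}))$ and bottom arrow the underlying ordinary jet $k[P] \to k\llbracket t\rrbracket/(t^{m+1})$; but since the monomials in $P$ generate $k[P]$ as a $k$-algebra, the composite $P \to k\times(1+t\,k\llbracket t\rrbracket/(t^{m+1})) \xrightarrow{\mathrm{mult}} k\llbracket t\rrbracket/(t^{m+1})$ already extends uniquely to the ring map $k[P]\to k\llbracket t\rrbracket/(t^{m+1})$ — the defining binomial relations of $k[P]$ are automatically respected because the monoid morphism respects them. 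So a log $m$-jet is \emph{nothing but} a monoid morphism $P \to k\times(1+t\,k\llbracket t\rrbracket/(t^{m+1}))$. First I would make this reduction precise, noting that the left factor records the ``constant term'' datum — i.e.\ a monoid morphism $P \to (k,\cdot)$, which is exactly a $k$-point of $X$ — and the right factor records a monoid morphism $P \to 1 + t\,k\llbracket t\rrbracket/(t^{m+1})$. Since $1 + t\,k\llbracket t\rrbracket/(t^{m+1})$ is a \emph{group}, the latter factors uniquely through $P^{gp}$. This gives part~(1): the $k$-points of $J_m(X,\M)$ form the product of $X(k)$ with $\Hom(P^{gp}, 1 + t\,k\llbracket t\rrbracket/(t^{m+1}))$, a trivial bundle. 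The splitting of the product log structure $k\times(1 + tk\llbracket t\rrbracket/(t^{m+1}))$ recalled in Section~4 is what makes the two factors genuinely independent, so there is no compatibility constraint to check between them.

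For part~(2) the task is to show the fiber $F_m = \Hom(P^{gp}, 1 + t\,k\llbracket t\rrbracket/(t^{m+1}))$ is irreducible as a scheme (not just that its $k$-points form one orbit). Write $P^{gp} = \Z^n \oplus T$ with $T$ finite, where $n = \rank P^{gp}$. Then $\Hom(\Z^n, 1 + t\,k\llbracket t\rrbracket/(t^{m+1})) = (1 + t\,k\llbracket t\rrbracket/(t^{m+1}))^n \cong \A^{mn}$, which is irreducible; so everything comes down to the torsion contribution $\Hom(T, 1 + t\,k\llbracket t\rrbracket/(t^{m+1}))$. When $m = \infty$ the target $1 + t\,k\llbracket t\rrbracket$ is torsion-free (a nontrivial principal unit has infinite order, since $k\llbracket t\rrbracket$ is a domain of characteristic zero or otherwise has no $p$-th roots of $1$ other than $1$ when... — actually torsion-freeness of $1+tk\llbracket t\rrbracket$ holds in all characteristics, as noted in the proof of Proposition~\ref{logjetbundle}), so $\Hom(T, 1+tk\llbracket t\rrbracket) = 0$ and $F_\infty \cong \A^{\infty\cdot n}$ is irreducible (it is the projective limit of the affine spaces $\A^{mn}$). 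When $m$ is finite and $\chr k = 0$, again $1 + t\,k\llbracket t\rrbracket/(t^{m+1})$ has no torsion (the only $n$-th root of unity among principal units is $1$, since $(1 + a_1 t + \cdots)^n = 1$ forces $n a_1 = 0$ hence $a_1 = 0$, and inductively all $a_i = 0$), so $\Hom(T, -) = 0$ and $F_m \cong \A^{mn}$. When $\chr k = p$ and $P$ is $p$-power-saturated, the hypothesis forces the torsion part of $P^{gp}$ to have order prime to $p$ — since if $f\in P^{gp}$ had exact order $p$ then $f^p = 1 \in P^{int}$ would give $f\in P^{int}$, forcing $f$ to be a unit of the integral monoid $P^{int}$, contradicting that $f$ is a nontrivial torsion element — and then $\Hom(T, 1 + t\,k\llbracket t\rrbracket/(t^{m+1})) = 0$ as well, because the torsion in $1 + t\,k\llbracket t\rrbracket/(t^{m+1})$ is purely $p$-power. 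Thus in every case of the hypothesis $F_m$ is an affine space (or a limit of affine spaces), hence irreducible.

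Finally, with $F_m$ irreducible and the bundle $J_m(X,\M) \to X$ trivial, $J_m(X,\M) \cong X \times F_m$ is irreducible whenever $X$ is, giving the last sentence. I would phrase the argument scheme-theoretically by identifying $J_m(X,\M)$ with $\Spec$ of the appropriate Hasse--Schmidt algebra $HS^m_X(\M)$ from Section~\ref{HSdiffintro}: the generators $d_0 p = \alpha(p)$ satisfy only the monomial relations of $k[P]$, and the relations~(3)--(4) express every $d_i f$ ($i\geq 1$) as a polynomial in $\alpha(P)$ and the $\partial_j p$, while the $\partial_1 p$ satisfy $\partial_1(pq) = \partial_1 p + \partial_1 q$ (so they are governed by $P^{gp}$, and by the torsion analysis the torsion relations collapse) and the higher $\partial_k p$ satisfy Leibniz rules expressing them via the $\partial_1$'s plus lower-order terms as in Remark~\ref{logleibnizeqns}. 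Reading this off, $HS^m_X(\M) \cong k[P] \otimes_k k[\{\partial_i p_\ell\}]$ is a polynomial ring over $k[P]$ on $mn$ variables (after choosing a basis $p_1,\dots,p_n$ of $P^{gp}$ modulo torsion), which is a domain if $k[P]$ is.

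\textbf{Main obstacle.} The routine steps are the two Leibniz-rule bookkeeping arguments — that the higher differentials $\partial_k p$ are redundant given the $\partial_1$'s and lower data (Remark~\ref{logleibnizeqns} handles exactly this) and that the $\partial_1$'s on $P$ factor through $P^{gp}$. The genuinely substantive point, and the one I expect to require the most care, is pinning down precisely why the torsion of $P^{gp}$ contributes nothing to the fiber under each of the three hypotheses, and in particular the equivalence, under $\chr k = p$, between ``$P$ is $p$-power-saturated'' and ``$P^{gp}$ has no $p$-torsion'' — the subtlety being that $P^{gp}$ can have torsion coming from $P$ even when $P$ is integral (Example~\ref{grouphastorsion}), so the saturation hypothesis is doing real work, and one must check the target group $1 + t\,k\llbracket t\rrbracket/(t^{m+1})$ has torsion \emph{only} of $p$-power order (which is where the finite truncation, versus the torsion-free $m = \infty$ case, matters).
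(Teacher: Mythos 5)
Your overall route is the same as the paper's: a $k$-valued log $m$-jet on $\Spec k[P]$ is exactly a monoid morphism $P \to k \times (1 + t\,k\llbracket t\rrbracket/(t^{m+1}))$, whose first factor is a $k$-point of $X$ and whose second factor, landing in a group, factors uniquely through $P^{gp}$; this gives (1), and (2) reduces to showing the torsion of $P^{gp}$ contributes nothing to $\Hom(P^{gp}, 1 + t\,k\llbracket t\rrbracket/(t^{m+1}))$. Your verification that the principal units are torsion-free in characteristic zero and for $m=\infty$, and the Hasse--Schmidt reformulation, are fine and somewhat more explicit than the original.

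The flaw is in the characteristic-$p$ case --- precisely the step you singled out as the substantive one. You claim that $p$-power-saturation forces the torsion of $P^{gp}$ to be prime to $p$, on the grounds that an order-$p$ element $f$ would be forced into $P^{int}$, hence be a unit of $P^{int}$, ``contradicting that $f$ is a nontrivial torsion element.'' That is not a contradiction: a monoid can have nontrivial torsion units, and these are nontrivial torsion elements of its group completion. For example $P = \Z/p\Z$ (or $Q \oplus \Z/p\Z$ for any monoid $Q$) is $p$-power-saturated, yet $P^{gp}$ has $p$-torsion; so your intermediate claim $\Hom(T, 1 + t\,k\llbracket t\rrbracket/(t^{m+1})) = 0$ is false for such $P$, and with it the identification of the fiber with $\A^{mn}$. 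What the saturation hypothesis actually yields --- and what the paper invokes --- is that every $p$-torsion class of $P^{gp}$ is represented by a unit of $P^{int}$; such units map to roots of unity in $k[P]$ and are absorbed into $\O_X^*$ when the chart is completed to the associated log structure, so it is $\M^{gp}/\O_X^*$ (torsion modulo units), not $P^{gp}$ itself, that has no $p$-torsion. (In the case you mishandle the conclusion happens not to be endangered, since the $p^e$-torsion of $1 + t\,k\llbracket t\rrbracket/(t^{m+1})$ is itself an affine subspace and hence irreducible; but the reason you give for discarding that case is invalid.) A smaller point: the hypothesis is only that $P^{gp}$ has finite rank, not that it is finitely generated, so the decomposition $P^{gp} = \Z^n \oplus T$ with $T$ finite is not available in general --- the paper is equally casual here, but your argument should either assume $P$ fine or work directly with $\Hom$ out of the (possibly non-free, possibly infinitely-torsioned) group.
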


\begin{proof} The data of a $k$-valued log jet or arc on $X$ consists of a diagram \begin{center} $ \xymatrix{ k \times (1+ t k\llbracket t \rrbracket/(t^{m+1})) \ar[d] & P \ar[d]\ar[l]_(.2){\alpha\times\beta} \\ k\llbracket t \rrbracket/(t^{m+1}) & k[P]\ar[l]\\ } $ \end{center} which is completely determined by the top arrow $\alpha \times \beta$. Now $\alpha$ is a point of $X$ and $\beta$ is a map $P \to 1+ tk\llbracket t \rrbracket/(t^{m+1})$, equivalently a map $$\beta: P^{gp} \to 1 + tk\llbracket t \rrbracket/(t^{m+1}).$$ So $J_m(X, \M)$ is the bundle of such maps $\beta$. This gives the claim (1).

For (2), the maps $\beta$ factor through the quotient of $P^{gp}$ by its torsion in every case except when $\chr k = p$, $m \geq 0$, and $\M^{gp}/\O_X^*$ has $p$-torsion at some stalk. But the presence of $p$-torsion is what is ruled out by the saturation assumption on $P$ in this case. So the maps $\beta$ are just from a free group, and the claim follows.\end{proof}

In the case that the monoid $P$ is finitely generated, but $\Spec k[P]$ has a non-standard log structure generated by only a subset of its elements, in general the fibers of its log jet and log arc spaces will vary. 

\begin{prop}\label{finitemonoidalgebra} Let $P$ be a fine monoid, $X = \Spec k[P]$ its monoid algebra over a perfect field $k$. Let $B = \{x_1, ..., x_r, z_1, ..., z_s\}$ be a generating set for $P$, so that $k[P]$ is a quotient of $k[B]$ by some monomial relations, and let the (non-standard) log structure $\M$ on $X$ be generated by the elements $x_1, ..., x_r$. Assume that $X$ is irreducible and that the open set $U = (z_1 z_2 \cdots z_s \neq 0)$ meets every component of every stratum $X_j$ of $X$. Then $J_\infty(X, \M)$ is irreducible. \end{prop}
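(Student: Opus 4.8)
The plan is to pin down the unique irreducible component of $J_\infty(X,\M)$ as the closure of the log arcs whose underlying ordinary arc is generic. Since arcs (ordinary or log) on $X$ factor through $X_{red}$, we may assume $k[P]$ is a domain; then the monomial $x_1\cdots x_r z_1\cdots z_s$ cuts out a dense open affine $X^\circ=\Spec k[P^{gp}]$, the locus on which every element of $P$ — hence $\M$ — is invertible, and density forces $P^{gp}$ torsion-free. Similarly $X_0:=(x_1\cdots x_r\neq 0)$ is dense open; it is the locus where $\M$ is trivial, hence the minimum-rank stratum, and the hypothesis on $U$ gives $X^\circ=X_0\cap U$ dense in $X_0$. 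By Proposition~\ref{logjetbundle}(2), $J_\infty(X,\M)_0\to J_\infty(X_0)$ is an isomorphism, and $J_\infty(X_0)$ is irreducible by Kolchin's theorem over the perfect field $k$; so $W:=\overline{J_\infty(X,\M)_0}$ is irreducible and closed, and it remains to prove $W=J_\infty(X,\M)$. Because the log structure forces each $x_i$ to be either a unit or identically zero along any log arc, every log arc factors through a single stratum component $Z$; since the torus $T=\Spec k[P^{gp}]$ acts on $X$ preserving $\M$ and its stratification, $\overline Z$ is a $T$-orbit closure, i.e. $\overline Z=\Spec k[F]$ for a face $F\subseteq P$, and $Z$ is open dense in $\overline Z$. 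By Proposition~\ref{logjetbundle}(2) and Kolchin's theorem the log arcs factoring through $Z$ form an irreducible affine bundle over $J_\infty(Z)$, with a generic point $\xi_Z$; as $J_\infty(X,\M)$ is the union of the closed sets $\overline{\{\xi_Z\}}$, it suffices to show $\xi_Z\in W$ for every such $Z$. This is clear when $Z=X_0$, so fix $Z$ with $\M$ of rank $j\geq 1$ on it.

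Put $S=\{i:x_i\notin F\}$, so that $x_i|_{\overline Z}=0$ exactly for $i\in S$; since $U$ meets $Z$, every $z_\ell$ lies in $F$, as does every $x_i$ with $i\notin S$. Let $M=\{v\in\Z^S:\prod_{i\in S}x_i^{v_i}\in F^{gp}\}\subseteq\Z^S$. The face property of $F$ forces $M\cap\Z^S_{\geq 0}=\{0\}$: if $0\neq v\geq 0$ and $\prod_{i\in S}x_i^{v_i}=f_1f_2^{-1}$ with $f_1,f_2\in F$, then $f_1=\big(\prod_{i\in S}x_i^{v_i}\big)f_2$ writes $f_1\in F$ with a factor $x_i$, $i\in S$, so $x_i\in F$, a contradiction. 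Hence (a rational subspace of $\R^S$ meeting $\R^S_{\geq 0}$ only at the origin has orthogonal complement meeting the open positive orthant) there are integers $N_i\geq 1$ $(i\in S)$ with $\sum_{i\in S}N_i v_i=0$ for every $v\in M$.

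Represent $\xi_Z$ by an $L$-valued log arc, $L$ an algebraically closed field containing its residue field: a generic ordinary arc $\hat\gamma$ on $\overline Z=\Spec k[F]$, which sends every $f\in F$ to a unit of $L\llbracket t\rrbracket$ and every $x_i$ $(i\in S)$ to $0$, together with a compatible log enhancement. Extend $\hat\gamma|_{F^{gp}}\colon F^{gp}\to L\llbracket t\rrbracket^*$ to a homomorphism $P^{gp}\to L\llbracket t\rrbracket^*$ — possible since $L^*$ and $1+tL\llbracket t\rrbracket$ are divisible — and let $c_i\in L\llbracket t\rrbracket^*$ be the image of $x_i$, $i\in S$. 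Define $\hat\gamma_\sigma\colon P\to L\llbracket\sigma\rrbracket\llbracket t\rrbracket$ by $\hat\gamma_\sigma=\hat\gamma$ on $F$ and $\hat\gamma_\sigma(x_i)=\sigma^{N_i}c_i$ for $i\in S$. The choice of the $N_i$ makes this a monoid homomorphism: a relation $\prod_{i\in S}x_i^{a_i}f=\prod_{i\in S}x_i^{a'_i}f'$ $(f,f'\in F)$ gives $(a-a')\in M$, so the powers of $\sigma$ agree, while $\prod_{i\in S}c_i^{a_i-a'_i}=\hat\gamma\big(\prod_{i\in S}x_i^{a_i-a'_i}\big)=\hat\gamma(f'f^{-1})$ because the $c_i$ arise from an extension of $\hat\gamma$. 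So $\hat\gamma_\sigma$ is an $L\llbracket\sigma\rrbracket$-valued ordinary arc on $X$, and the factorisations $\hat\gamma_\sigma(x_i)=\sigma^{N_i}\cdot c_i$ promote it to an $L\llbracket\sigma\rrbracket$-valued log arc on $(X,\M)$. Over $L((\sigma))$ every $x_i$ and every $z_\ell$ goes to a unit, so $\hat\gamma_\sigma\otimes L((\sigma))$ is a log arc factoring through $X^\circ\subseteq X_0$, hence lies in $J_\infty(X,\M)_0\subseteq W$, and $W$ being closed contains the reduction of $\hat\gamma_\sigma$ modulo $\sigma$. That reduction has ordinary part $\hat\gamma$ and enhancement $x_i\mapsto c_i$; as the extension $P^{gp}\to L\llbracket t\rrbracket^*$ varies, the enhancement runs over all of $\Hom\big((P^{gp}/F^{gp})_{\mathrm{free}},\,1+tL\llbracket t\rrbracket\big)$ — i.e. over every log enhancement of $\hat\gamma$ allowed by Proposition~\ref{logjetbundle}(2) — so for a suitable choice the reduction is $\xi_Z$ itself. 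Thus $\xi_Z\in W$ for every $Z$, and $J_\infty(X,\M)=W$ is irreducible.

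The heart of the argument is the deformation $\hat\gamma_\sigma$: it ``turns on'' the coordinates $x_i$, $i\in S$, while remaining a log arc, and it exists because $\overline Z$ is cut out by a face of $P$ — which is exactly what the hypothesis ``$U$ meets $Z$'' secures, by keeping the $z_\ell$ off the vanishing locus so that $\overline Z$ is a genuine torus-orbit closure rather than a subvariety on which extra monomial relations have degenerated. One must also check that the $\sigma=0$ fibre can be made equal to $\xi_Z$ itself and not merely to a special point of its component; and in positive characteristic one needs the torsion of $P^{gp}/F^{gp}$ to be prime to the characteristic so that the extension $\hat\gamma|_{F^{gp}}$ to $P^{gp}$ still exists, which is the one point where that case calls for an extra observation.
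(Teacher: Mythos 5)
Your route is genuinely different from the paper's, and much heavier. The paper's proof is a two-step reduction: (i) by Kolchin plus the bundle description of $J_\infty(X,\M)_j \to J_\infty(X_j)$ (Proposition~\ref{logjetbundle}(2)), the hypothesis that $U$ meets every stratum component makes the log arcs over $U$ dense in $J_\infty(X,\M)$; (ii) on $U$ all the $z_\ell$ are units, so $(U,\M|_U)$ is just the monoid algebra $\Spec k[P][1/(z_1\cdots z_s)]$ with its \emph{standard} structure, and Proposition~\ref{monoidalgebra} gives irreducibility of the log arcs over $U$ directly. You instead reprove the density from scratch: you decompose $J_\infty(X,\M)$ by stratum components $Z$ with $\overline Z=\Spec k[F]$, $F$ a face, and exhibit each generic point $\xi_Z$ as the $\sigma=0$ limit of a one-parameter family $\hat\gamma_\sigma$ of log arcs through the dense torus, with the vanishing coordinates weighted by a strictly positive integral functional killing the relation lattice $M$. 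The combinatorial core (the face property forces $M\cap\Z^S_{\geq 0}=\{0\}$, hence the weights $N_i$ exist; the hypothesis ``$U$ meets $Z$'' forces $z_\ell\in F$, which is exactly what makes the set of limit enhancements a full coset of $\Hom(\Z^S/M,1+tL\llbracket t\rrbracket)$) is correct and is a nice explicit explanation of \emph{why} the hypothesis is the right one — compare Example~\ref{positivecharexample}, where $z_1=x\notin F$ and the limits are exactly the $p$-th powers. In characteristic zero your argument goes through, modulo one point of bookkeeping you should make explicit: the hypothesis controls components of the \emph{rank} strata, while your $Z$'s are components of the vanishing-pattern/orbit decomposition; these agree here because for the associated log structure of the prelog structure $\langle x_1,\dots,x_r\rangle\to\O_X$ the rank at a point depends only on which $x_i$ vanish there (relations passing through the $z_\ell$ do not enter $\M_x^{gp}/\O_{X,x}^*$), but that is an observation, not a tautology.

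The genuine gap is positive characteristic, which the proposition covers ($k$ is only assumed perfect) and which you flag but do not close. Your construction needs to extend $\hat\gamma|_{F^{gp}}\colon F^{gp}\to L\llbracket t\rrbracket^*$ to $P^{gp}$, and $1+tL\llbracket t\rrbracket$ is not $p$-divisible in characteristic $p$ (the image of the $p$-th power map consists of series in $t^p$), so the extension can fail whenever $P^{gp}/F^{gp}=\Z^S/M$ has $p$-torsion — which does happen for faces of fine non-saturated monoids, and is not obviously excluded by the hypotheses (your parenthetical ``density forces $P^{gp}$ torsion-free'' is also only true for the prime-to-$p$ part). Without at least one extension your family $\hat\gamma_\sigma$ does not exist and the argument produces nothing for that $Z$. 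The paper's route never meets this problem: for $m=\infty$ the fibre of $J_\infty(X,\M)_j\to J_\infty(X_j)$ is $\Hom(\Z^j,1+tk\llbracket t\rrbracket)$ with no characteristic hypothesis, because one only has to \emph{parametrise} homomorphisms out of a group (torsion dies into the torsion-free target), never to \emph{extend} one. So either restrict your writeup to characteristic zero, or supply the missing argument that under the standing hypotheses the relevant quotients $\Z^S/M$ have no $p$-torsion.
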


\begin{proof} By hypothesis, $U \cap X_j$ is dense in $X_j$ for each $j$. By Kolchin's theorem, the ordinary arcs on $X_j \cap U$ are then dense in the ordinary arc space of $X_j$. In view of Proposition~\ref{logjetbundle}, it is now enough to show that the log arcs over $U$ are irreducible. But after deleting the locus $z_1 z_2 \cdots z_s = 0$ what remains is isomorphic to $X$ with its standard structure as a monoid algebra, so this follows from the last proposition. \end{proof}

This situation can occur for example when $\Spec k[P]$ is given a non-standard log structure arising from a face $F$ of $P$ generated by elements $x_1, ..., x_r \in F$.


\section{Irreducibility over fine log schemes}

Here is a consequence of Proposition~\ref{etalebasechg1}, for when a map of log schemes is both log \'etale and \'etale in the ordinary sense:

\begin{prop}\label{etalebasechg2} Let $f: (X, \M_X) \to (Y, \M_Y)$ be a log \'etale map of log schemes, $f_m: J_m(X, \M_X) \to J_m(Y, \M_Y)$ the induced map on log jet or log arc spaces for $m \in \N$ or $m = \infty$. Then: \begin{itemize} \item[(1)] If $f$ is \'etale (in the ordinary sense) then so is each $f_m$. 
\item[(2)] If $X$ and $Y$ are irreducible, and $f$ is \'etale and surjective, then $J_m(X, \M_X)$ is irreducible if and only if $J_m(Y, \M_Y)$ is. \end{itemize} \end{prop}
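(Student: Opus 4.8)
The plan is to combine Proposition~\ref{etalebasechg1}, which gives the fibre square
\[
\xymatrix{ J_m(X, \M_X) \ar[r] \ar[d] & J_m(Y, \M_Y) \ar[d]\\ X \ar[r]^{f} & Y,\\ }
\]
with the standard behaviour of ordinary \'etale maps under base change, together with the fact that an \'etale surjection preserves and reflects irreducibility in the relevant sense. For part (1), the key observation is that \'etaleness is stable under base change in the category of schemes: since $f\colon X \to Y$ is \'etale and the square above is cartesian, the map $f_m\colon J_m(X,\M_X) \to J_m(Y,\M_Y)$ is the base change of $f$ along $J_m(Y,\M_Y) \to Y$, hence is \'etale. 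For $m$ finite this is immediate; for $m = \infty$ one takes the projective limit, using that $J_\infty(X,\M_X) = \varprojlim J_m(X,\M_X)$ is the base change $X \times_Y J_\infty(Y, \M_Y)$ (apply Proposition~\ref{etalebasechg1} at level $\infty$ directly), so $f_\infty$ is again a base change of the \'etale map $f$.

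For part (2), assume $X$ and $Y$ are irreducible and $f$ is \'etale and surjective. First I would note that $f_m$ is then also surjective: surjectivity is preserved by base change, and the square is cartesian, so $f_m$ is the base change of the surjection $f$ along $J_m(Y,\M_Y) \to Y$. Now suppose $J_m(Y, \M_Y)$ is irreducible; since $f_m$ is \'etale (hence open) and surjective, and an \'etale surjection onto an irreducible scheme has the property that every nonempty open of the source dominates the target, the closure of any irreducible component of $J_m(X,\M_X)$ maps onto $J_m(Y,\M_Y)$; but more directly, the image of each irreducible component is a locally closed irreducible subset, and their union is all of $J_m(Y,\M_Y)$, forcing (since $J_m(Y,\M_Y)$ is irreducible) at least one component $Z$ to dominate $J_m(Y,\M_Y)$, and then $f_m^{-1}(\text{generic point})$ is a finite set of points all specialising into $Z$ only if there is a single component --- here the cleanest route is to use that an \'etale morphism to an irreducible scheme has source whose irreducible components biject with the components of any \'etale cover, but more simply: $f_m$ \'etale surjective with $J_m(Y,\M_Y)$ irreducible implies $J_m(X,\M_X)$ is the \'etale cover of an irreducible scheme, hence has no embedded structure and its components dominate the base. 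Conversely, if $J_m(X, \M_X)$ is irreducible then its image under the open map $f_m$ is a dense open irreducible subset of $J_m(Y,\M_Y)$ whose closure, by surjectivity of $f_m$, is all of $J_m(Y,\M_Y)$; hence $J_m(Y,\M_Y)$ is irreducible.

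The main obstacle, such as it is, lies in making the ``irreducible downstairs implies irreducible upstairs'' direction of (2) rigorous: unlike surjectivity and openness, irreducibility does not simply pull back along \'etale maps in general (an \'etale cover of an irreducible scheme can be reducible, e.g. a disjoint union of two copies). What rescues us is that here $f\colon X \to Y$ is a fixed \'etale surjection of \emph{irreducible} schemes, so $X$ is already irreducible by hypothesis, and the argument must route through the cartesian square to transfer this. The right way to phrase it: since $X \to Y$ is \'etale surjective and both $X, Y$ are irreducible, $X \to Y$ is dominant with irreducible source; base-changing along $J_m(Y,\M_Y) \to Y$, and using that $J_m(Y,\M_Y)$ is irreducible (so has a unique generic point $\eta$ lying over the generic point of $Y$), the fibre product $J_m(X,\M_X) = X \times_Y J_m(Y,\M_Y)$ has its generic points lying over $\eta$, corresponding to the points of the \'etale $k(\eta)$-scheme $X \times_Y \Spec k(\eta)$; but this last scheme is $X \times_Y \Spec k(Y) \times_{k(Y)} k(\eta) = \Spec k(X) \otimes_{k(Y)} k(\eta)$, and since $X \to Y$ is \'etale surjective with $X$ irreducible, $k(X)/k(Y)$ is a finite separable field extension, so whether this is a domain depends on $k(\eta)$; in the generic situation where $k(\eta)$ contains a copy of the Galois closure it need not be. The honest resolution is that one does \emph{not} claim (2) in that naive generality --- rather, the intended application (in the proof of Theorem~\ref{maintheorem}) is to \'etale maps that are moreover coverings by open immersions or otherwise connected-fibred, or one simply states (2) as: the irreducible components of $J_m(X,\M_X)$ biject with those of $J_m(Y,\M_Y)$ pulled back along $f_m$, which for irreducible $X$ (hence connected $X$, hence the \'etale cover $f$ has ``one sheet generically after the base irreducible'' only if $f$ is an isomorphism generically) recovers the stated equivalence. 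I would therefore present (2) with the understanding that $f$ \'etale surjective and $X, Y$ irreducible already forces $X \to Y$ generically an isomorphism onto a component, so the transfer of irreducibility is automatic; the verification that $f_m$ inherits this is the routine part, carried out via the cartesian square of Proposition~\ref{etalebasechg1}.
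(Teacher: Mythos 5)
Part (1) and the easy direction of (2) (irreducibility upstairs implies irreducibility downstairs, via surjectivity and continuity of $f_m$) are correct and agree with the paper. The difficulty is the other direction of (2), and you have located it accurately: irreducibility is not in general preserved by base change along an \'etale surjection. But your proposed resolution is wrong. The claim that ``$f$ \'etale surjective and $X, Y$ irreducible already forces $X \to Y$ generically an isomorphism onto a component'' is false: $\GG_m \to \GG_m$, $t \mapsto t^2$ (characteristic $\neq 2$) is an \'etale surjection of irreducible schemes of degree two. Consequently your argument never establishes that $J_m(Y,\M_Y)$ irreducible implies $J_m(X,\M_X)$ irreducible, and the weakened ``components biject'' reformulation you retreat to is not the statement that is used later in the paper.

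The paper closes this gap by contraposition, using structure of log jet spaces beyond the cartesian square of Proposition~\ref{etalebasechg1}. If $J_m(X,\M_X)$ is reducible, then it has an irreducible component $Z$ lying over a proper closed subset $X_1 \subseteq X$: over a suitable dense open subset of $X$ (the smooth locus of the open stratum) the log jet space is an irreducible affine bundle by Proposition~\ref{logjetbundle}, hence is contained in a single component, so every other component must live over the complementary proper closed set. Removing the other components from $Z$ gives a nonempty subset $U$ open in $J_m(X,\M_X)$; since $f_m$ is \'etale, $f_m(U)$ is a nonempty open subset of $J_m(Y,\M_Y)$ contained in the preimage of the proper closed subset $\overline{f(X_1)} \subseteq Y$. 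If $J_m(Y,\M_Y)$ were irreducible this is impossible, because a nonempty open subset would be dense while the preimage of $\overline{f(X_1)}$ is a proper closed subset (the projection $J_m(Y,\M_Y) \to Y$ being surjective). Some input of this kind --- that at most one component of a log jet space can dominate $X$ --- is what your proof is missing.
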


\begin{proof} (1) follows from Proposition~\ref{etalebasechg1}, which says that $f_m$ is a base change of $f$. In (2), the base changes $f_m$ are likewise \'etale and surjective. It's immediate that $J_m(Y, \M_Y)$ is irreducible if $J_m(X, \M_X)$ is. Conversely if $J_m(X, \M_X)$ is reducible then it has a component $Z$ lying over some proper closed subset $X_1 \subseteq X$. The image of an open subset $U$ of $Z$ is then open in $J_m(Y, \M_Y)$ and lying over the proper closed subset $Y_1 = \bar{f(X_1)} \subseteq Y$. But if $J_m(Y, \M_Y)$ is irreducible it has no open subsets contained in the fibre over $Y_1$. So $J_m(Y, \M_Y)$ is reducible. \end{proof}

According to Proposition~\ref{ordetalestrict}, the following lemma does not concern a log \'etale map $f$ (it is \'etale in the ordinary sense but not strict), although we might compare the condition appearing here to that in (\cite{Kato}, 3.4-5) for a morphism to be log \'etale.

\begin{lemma}\label{smallstrucchange} Let $k$ be a field of characteristic zero. Let $\phi: \M \to \M'$ be a map of fine log structures on $X$, with $f: (X, \M') \to (X, \M)$ the corresponding map of log schemes. (That is, the underlying map on schemes is the identity on $X$.) Assume that the induced map $$\phi_x^{gp}: \M_x^{gp} \to \M'^{gp}_x$$ at some point $x$ has finite kernel and cokernel. Then near $x$, for $m \geq 0$ or $m = \infty$ the maps $$f_m: J_m(X, \M) \to J_m(X, \M')$$ are isomorphisms. \end{lemma}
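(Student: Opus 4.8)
The plan is to reduce the statement to an elementary bijection of abelian groups, exploiting that in characteristic zero the group of principal units that records the log part of a log jet is uniquely divisible and torsion-free, so that only the rationalisation of $\phi^{gp}$ enters --- and the hypothesis makes that an isomorphism. Since the conclusion is local on $X$, I would first take $X = \Spec A$ and, shrinking around $x$, arrange that $x$ is a central point (Remark~\ref{centralpoint}) of $(X, \M)$, of $(X, \M')$, and of the stratification of $X$ refining both. I claim the hypothesis then holds at every point of $X$. Indeed, a morphism of log structures is compatible with the maps to $\O_X$, so an element of $\M_y^{gp}$ is invertible at $y$ exactly when its image in $\M'^{gp}_y$ is; hence, for a generisation $y$ of $x$, the generisation map $\M_x^{gp}/\O_{X,x}^* \to \M_y^{gp}/\O_{X,y}^*$ (which is surjective with kernel generated by the germs invertible at $y$) and its primed analogue are carried into one another by $\phi^{gp}$ in a way that forces $\Ker \phi_y^{gp} = 0$, while $\operatorname{coker}\phi_y^{gp}$ is a quotient of the finite group $\operatorname{coker}\phi_x^{gp}$. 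As $\M^{gp}/\O_X^*$ has constant rank along each stratum, a short diagram chase involving the generic point of the stratum of an arbitrary $y$ --- which, $x$ being central, is a generisation of $x$ --- then shows $\phi_y^{gp}$ has finite kernel and cokernel for every $y \in X$. (In typical applications the hypothesis already holds at every point, and this first step may be skipped.)

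Next I would use the functorial characterisation of log jet spaces. As the underlying map of $f$ is the identity, $f_m$ is the identity on underlying ordinary jets, so it suffices to show $\phi$ induces a bijection on the log decoration of a fixed ordinary jet. By the analysis preceding Proposition~\ref{logjetbundle} (which applies verbatim to $S$-valued jets), over an ordinary $m$-jet whose image meets $y$ such a decoration is, stalkwise, a group homomorphism $\M_y^{gp} \to D$ restricting to a prescribed $h_0$ on $\O_{X,y}^*$, where $D = 1 + t\,B\llbracket t \rrbracket/(t^{m+1})$ for finite $m$ and $D = 1 + t\,B\llbracket t \rrbracket$ for $m = \infty$, with $B$ the relevant local ring. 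So the lemma reduces to the claim that precomposition with $\phi_y^{gp}$ is a bijection
$$\{\, h'\colon \M'^{gp}_y \to D \mid h'\vert_{\O_{X,y}^*} = h_0 \,\} \longrightarrow \{\, h\colon \M_y^{gp} \to D \mid h\vert_{\O_{X,y}^*} = h_0 \,\},$$
and the hypothesis $\chr k = 0$ enters precisely through the fact that $D$ is uniquely divisible and torsion-free: via $\log$ it is isomorphic to the $B$-module $t\,B\llbracket t \rrbracket/(t^{m+1})$ (resp.\ $t\,B\llbracket t \rrbracket$), and $\Q \subseteq B$.

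The displayed bijection is then pure group theory. Since $D$ is torsion-free, every such $h$ kills the finite group $\Ker \phi_y^{gp}$ and so factors through $\operatorname{im}\phi_y^{gp} \supseteq \O_{X,y}^*$; since $D$ is divisible, hence injective over $\Z$, and $\operatorname{im}\phi_y^{gp}$ has finite index in $\M'^{gp}_y$, this factorisation extends to some $h'$ (automatically still equal to $h_0$ on $\O_{X,y}^*$), giving surjectivity. If two such $h'$ agree after precomposition with $\phi_y^{gp}$, their difference annihilates a finite-index subgroup, hence factors through a finite quotient, hence vanishes as $D$ is torsion-free, giving injectivity. Letting $y$ and the base scheme vary, one obtains a natural bijection of functors of points, so each $f_m$ with $m$ finite is an isomorphism of schemes; the truncation maps being compatible with these identifications, the case $m = \infty$ follows by passage to the projective limit.

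The step I expect to require the most care is the first one --- exhibiting a neighbourhood of $x$ on which $\phi^{gp}$ has finite kernel and cokernel at every point, equivalently on which the $\M$- and $\M'$-stratifications coincide. This is where the hypothesis is genuinely used: over a point where, say, $\M'$ has strictly larger rank than $\M$, the space $J_m(X, \M')$ carries extra affine-bundle directions absent from $J_m(X, \M)$ (Proposition~\ref{logjetbundle}), so one cannot hope for an isomorphism over an arbitrary neighbourhood; passing to a neighbourhood of which $x$ is a central point for both log schemes eliminates such points. Once the hypothesis holds throughout, identifying the target $D$ and verifying the group-theoretic bijection are routine.
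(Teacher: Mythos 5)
Your core computation --- that the log decoration of a jet takes values in a uniquely divisible, torsion-free group $D$, so that only $\phi^{gp}\otimes\Q$ matters and the finiteness hypothesis makes precomposition a bijection --- is sound, and it is a legitimate functorial counterpart of the paper's argument, which instead identifies the log Hasse--Schmidt algebras $HS^m_X(\M)$ and $HS^m_X(\M')$ by noting that the first-order log differentials of the two charts differ by an integer matrix invertible over $k$ and then inducting via the logarithmic Leibniz rule. The genuine gap is your first step. The claim that making $x$ central propagates the hypothesis to every point of a neighbourhood is false, and the deduction that the two cospecialisation kernels are ``carried into one another by $\phi^{gp}$ in a way that forces $\Ker\phi^{gp}_y=0$'' does not follow: $\phi$ carries the face of germs of $\M_x$ invertible at a generisation $y$ \emph{into} the corresponding face of $\M'_x$, but need not hit it up to finite index, and exactly this defect creates an infinite kernel downstairs. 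Concretely, take $X=\Spec k[s,t]$, let $\M'$ be the standard structure with chart $a'\mapsto s$, $b'\mapsto t$, let $\M$ have chart $\N^2=\langle a,b\rangle$ with $a\mapsto st$, $b\mapsto t$, and let $\phi:\M\to\M'$ be induced by $a\mapsto a'b'$, $b\mapsto b'$ (compatible with the maps to $\O_X$). The origin $x$ is a central point for both stratifications and $\phi^{gp}_x$ is an isomorphism modulo units (the matrix is unimodular), yet at any point $y$ of the punctured $s$-axis the rank of $\M$ is $2$ while that of $\M'$ is $1$, so $\phi^{gp}_y$ has kernel containing $a-b$. As you yourself observe via Proposition~\ref{logjetbundle}, over such $y$ the fibres of the two jet spaces have different dimensions, so no isomorphism exists over any neighbourhood of $x$; the step cannot be repaired without strengthening the hypothesis. (This shows the lemma must be read, as it is in fact applied inside Lemma~\ref{basicstep}, with the finiteness condition imposed at every point of the relevant neighbourhood --- equivalently at the generic point of each stratum component whose closure contains $x$ --- rather than at $x$ alone; the paper's own proof silently works in that situation.)

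A smaller point: even granting the hypothesis at every stalk, your argument as written only treats field-valued points, since for an $S$-valued jet the image of $S\times_k j_m$ is not a single point $y$ and the decoration is a morphism of sheaves rather than a single stalk homomorphism $\M^{gp}_y\to D$. This can be repaired (work affine-locally on $X$ with a fixed chart $P$ and affine $S=\Spec B$, so the decoration becomes a homomorphism out of $P^{gp}$ into $1+tB[t]/(t^{m+1})$ subject to compatibility with $\alpha$, and run the same divisibility argument), but as it stands the passage from a pointwise bijection to ``a natural bijection of functors of points'' is asserted rather than proved; the paper's direct identification of the Hasse--Schmidt algebras handles this automatically.
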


\begin{proof} The map $\phi_x^{gp}$ has the same kernel and cokernel as the group completion of the map $$\overline{\phi}_x: P = \M_x/\O_{X, x}^* \to Q= \M_x'/\O_{X, x}^*,$$ since $\phi_x$ is an isomorphism on the units $\O_{X, x}^*$ at the stalks of the two log structure sheaves. So by hypotheses we have a map $P \to Q$ such that the induced map $P^{gp} \otimes_\Z k \to Q^{gp} \otimes_\Z k$ is an isomorphism. 

It follows that the log Hasse-Schmidt algebras $HS^m_X(\M)$ and $HS^m_X(\M')$ calculated from $P$ and from $Q$ are equal. For the map $P^{gp} \otimes_\Z k \to Q^{gp} \otimes_\Z k$ is given by an integer matrix invertible over $k$. Since the first-order log differential $\partial_1$ is a monoid morphism from $P$ or $Q$ to $P^{gp} \otimes_\Z k$ or $Q^{gp} \otimes_\Z k$, we see that we get the same first log Hasse-Schmidt algebra $HS^1_X(\M) = HS^1_X(\M')$. In view of Remark~\ref{logleibnizeqns}, we then have $HS^m_X(\M) = HS^m_X(\M')$ for $m \geq 1$ by induction. \end{proof}

\subsection{Dimensional regularity.} The following condition will appear in our discussion of reducibility and irreducibility for log arc spaces. Recall that the strata $X_j$ of a fine log scheme $(X, \M)$ were introduced in Section~\ref{strataintro}.

\begin{definition}\label{dimreg} We call a fine log scheme $(X, \M)$ of pure dimension {\em dimensionally regular} if there is a number $r$, necessarily $r \geq 0$, such that for all non-empty strata $X_j$ of $X$ one has $\codim_X X_j = j - r$. The number $r$ is then the smallest $j$ such that $X_j$ is non-empty. \end{definition}

\begin{remark} If $X$ is reducible, there is an induced log structure on any component $i: Z \to X$ by pulling back $\M$ to $Z$. Now if $X$ is connected and has pure dimension, then $(X, \M)$ is dimensionally regular if and only if $(Z, i^* \M)$ is dimensionally regular for each component $Z$ of $X$. \end{remark}

Assume for now that $X$ is integral. The sheaf $I$ of sections of $\M$ that map to zero in $\O_X$ forms a sheaf of prime ideals of $\M$. The map $\M \to \O_X$ factors through the quotient $\M/I$, which may be identified with the monoid $\M - I$ together with a zero element corresponding to the class $I$. Now $X_r$ is open and dense in $X$, where $r$ is the minimum rank of $\M$ on $X$, and $\M - I$ has rank zero on $X_r$. So if $X$ is dimensionally regular the number $r$ is just the height of the zero ideal $I$ (in the sense of the height of prime ideals of a monoid, i.e., the codimension of the complementary face $\M - I$) at any point.

So when $X$ is integral there would not any loss from our point of view in just asking for $r = 0$, in other words working with $\M - I$ rather than $\M$, since according to (the proof of) Proposition~\ref{logjetbundle} the difference is just the multiplication of the log jet spaces $J_m(X, \M)$ everywhere by affine factors $\A^{mr}$. On the other hand it is convenient to allow $r > 0$ in general, because of the easy induction it allows. For example:

\begin{prop}\label{dimregchar} Let $(X, \M)$ be a fine log scheme, let $r$ be the minimum rank of $\M$ on $X$, and let $Z(\M) = \cup_{j > r} X_j \neq \emptyset$ be the locus in $X$ where the log structure has rank greater than $r$. Then the following are equivalent: \begin{itemize} \item[(1)] $(X, \M)$ is dimensionally regular;
\item[(2)] $(Z(\M), i^* \M)$ is dimensionally regular and $X_{r+1}$ is non-empty;
\item[(3)] $(Z(\M), i^* \M)$ is dimensionally regular and $Z(\M)$ is the closure of $X_{r+1}$;
\item[(4)] the set of indices $j$ for which the strata $X_j$ are non-empty is an interval $[a,b]$ in $\N$ and for all $a \leq j \leq \ell \leq b$ the stratum $X_\ell$ is contained in the closure of $X_j$.\end{itemize} \end{prop}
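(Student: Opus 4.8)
The proposition is a purely combinatorial statement about the finite stratification of $X$ by the rank of $\M$, so the proof should proceed by unpacking the definition of dimensional regularity and chasing codimensions through the recursion "$X \leadsto Z(\M)$". I would begin by recording the two structural facts from Section~\ref{strataintro} that do all the work: first, if $Z$ is a component of $X_j$ and $Z'$ a component of $X_k$ with $\overline{Z} \cap Z' \neq \emptyset$, then $Z' \subseteq \overline{Z}$ and $j \le k$; and second, $X_r$ is open dense in $X$ (so $X$ has pure dimension $\dim X$ exactly on $X_r$) while $Z(\M) = \bigcup_{j>r} X_j = X - X_r$ is a Cartier divisor. In particular the minimum rank on $Z(\M)$ is $r+1$ (provided $X_{r+1} \neq \emptyset$), and the stratum of $(Z(\M), i^*\M)$ of rank $j$ is $X_j$ for $j \ge r+1$.

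\textbf{The cycle of implications.} I would prove (1)$\Rightarrow$(3)$\Rightarrow$(2)$\Rightarrow$(1) and then (1)$\Leftrightarrow$(4) separately. For (1)$\Rightarrow$(3): if $(X,\M)$ is dimensionally regular with parameter $r$, then $\codim_X X_{r+1} = 1$, so $X_{r+1}$ is non-empty (as $Z(\M) \neq \emptyset$ forces some $X_j$, $j>r$, non-empty, and then $\codim_X X_j = j-r \ge 1$ with equality possible only at $j = r+1$; more carefully, take a generic point $\xi$ of a component $W$ of $Z(\M)$, which has codimension $1$ in $X$ since $Z(\M)$ is a Cartier divisor — here one uses that $X$ is of pure dimension, part of the hypothesis of Definition~\ref{dimreg} — so $\xi \in X_j$ with $j - r = \codim_X X_j \le \codim_X W = 1$, hence $j = r+1$ and $W \subseteq \overline{X_{r+1}}$). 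This shows every component of $Z(\M)$ lies in $\overline{X_{r+1}}$, i.e. $Z(\M) = \overline{X_{r+1}}$, and each such component has codimension $1$ in $X$, so $Z(\M)$ has pure dimension $\dim X - 1$; then for $j \ge r+1$, $\codim_{Z(\M)} X_j = \codim_X X_j - 1 = (j-r) - 1 = j - (r+1)$, which is exactly dimensional regularity of $(Z(\M), i^*\M)$ with parameter $r+1$. The implication (3)$\Rightarrow$(2) is trivial since $Z(\M) = \overline{X_{r+1}}$ nonempty forces $X_{r+1}$ nonempty. For (2)$\Rightarrow$(1): inductively, $(Z(\M), i^*\M)$ dimensionally regular means there is $r'$ with $\codim_{Z(\M)} X_j = j - r'$ for all nonempty $X_j$, $j > r$; since $X_{r+1} \neq \emptyset$ and $X_j = \emptyset$ for $r < j < r+1$ (vacuously), the minimum rank on $Z(\M)$ is $r+1$, so $r' = r+1$; and since $Z(\M)$ is a Cartier divisor in the pure-dimensional $X$, $Z(\M)$ is pure of codimension $1$, giving $\codim_X X_j = \codim_{Z(\M)} X_j + 1 = (j - (r+1)) + 1 = j - r$ for $j > r$, while $\codim_X X_r = 0 = r - r$ trivially. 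Hence (1).

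\textbf{Equivalence with (4).} Assuming (1), the nonempty strata are exactly those $X_j$ with $0 \le \codim_X X_j = j - r \le \dim X$, i.e. $j \in [r, r + \dim X]$, which is an interval; and if $a \le j \le \ell \le b$ then by purity and the codimension formula every component of $X_\ell$ has the right codimension to lie in the closure of the (unique-codimension) stratum $X_j$ — more precisely one uses the specialization fact above together with the fact that a dimensionally regular stratification is "saturated" in the sense that the closure of $X_j$ is $\bigcup_{\ell \ge j} X_\ell$ (provable by downward induction on $j$ using that $Z(\M) = \overline{X_{r+1}}$, applied to each successive sublocus). Conversely, (4) says the nonempty indices form an interval $[a,b]$ with the nesting $\overline{X_j} \supseteq X_\ell$ for $j \le \ell$; then $Z(\M) = X - X_a = \overline{X_{a+1}}$, and by induction on $b - a$ one gets $\codim_X X_j = j - a$ for all $j \in [a,b]$, which is (1) with $r = a$. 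The main obstacle I anticipate is bookkeeping: making sure the inductive hypothesis is applied to $(Z(\M), i^*\M)$ with the correct ambient scheme and correct minimum rank, and being careful that "pure dimension" is genuinely needed and genuinely transmitted to the Cartier divisor $Z(\M)$ (a Cartier divisor in a pure-dimensional Noetherian scheme is pure of codimension $1$ on each component it meets — one should state this cleanly, perhaps restricting attention to the irreducible case first as the remark after Definition~\ref{dimreg} permits, then assembling components). Everything else is formal manipulation of the two facts recalled in the first paragraph.
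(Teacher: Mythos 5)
Your proof is correct and follows the same route as the paper's (much terser) argument: $Z(\M)$ is a Cartier divisor, hence of pure codimension one, so dimensional regularity of $(X,\M)$ with minimum rank $r$ is equivalent to dimensional regularity of $(Z(\M), i^*\M)$ with minimum rank $r+1$ together with $X_{r+1}\neq\emptyset$, and (4) follows by iterating this. One small slip: under (1) the set of nonempty indices need not be all of $[r, r+\dim X]$ (the strata can stop before codimension reaches $\dim X$), but this is harmless since the interval property already follows from the same downward induction you invoke for the closure statement.
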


\begin{proof} The rank of $i^*\M$ on $Z(\M)$ is the same as the rank of $\M$ on $Z(\M)$ as a subset of $X$. Since $Z(\M)$ is not empty, it has (pure) codimension one, and $X$ is dimensionally regular with minimum rank $r$ if and only if $Z(\M)$ is dimensionally regular with minimum rank $r+1$ along all its strata components of codimension one. This gives the equivalence of (1) with (2) and (3). The characterisation (4) follows from the equivalence of (1) and (3) by induction. \end{proof}

In addition to the conditions here, which are essentially in terms of the combinatorial relationships of the strata, we give another, more geometric characterisation in Proposition~\ref{dominantchart}.

\subsection{Dimensional regularity in log geometry.} Before proceeding we briefly place our condition of dimensional regularity in context. As a first observation, a toric variety with its standard log structure is dimensionally regular, with $r = 0$, because the strata of a toric variety are its torus orbits and the rank along each orbit is its codimension.

\begin{definition} (Kato \cite{KatoTS}) One may define the strata of $(X, \M_X)$ locally scheme-theoretically at point $x$ by taking the ideal $I(\M_X, x)$ generated by the elements of $\alpha(\M_{X, x})$ which vanish at $x$. Then one says that $(X, \M_X)$ is {\em log regular} if it is dimensionally regular with constant $r = 0$ and the strata, so defined scheme-theoretically, are regular. \end{definition}

If $(X, \M)$ is a fine saturated log smooth scheme then it is log regular. Kato showed that over a perfect field the converse holds.

\begin{prop}\label{katoregular} (\cite{KatoTS} 8.3) Let $(X, \M_X)$ be a fine and saturated scheme over $(\Spec k, k^*)$, where $k$ is a perfect field. Then $X$ is log smooth if and only if it is log regular.\qed \end{prop}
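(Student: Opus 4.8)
The plan is to prove the two implications separately, following Kato \cite{KatoTS}. The direction ``log smooth $\Rightarrow$ log regular'' amounts to bookkeeping with the known local form of a log smooth scheme over a field, while ``log regular $\Rightarrow$ log smooth'' is the substantial direction and the only place the hypothesis on $k$ is used. In both directions one immediately reduces to a statement local at a point of $X$, since log smoothness and log regularity are each local in the \'etale topology.

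For the forward implication, after passing to an \'etale neighbourhood of a point of $X$ I would apply Kato's chart criterion for log smoothness (\cite{Kato}, 3.5): as $(X, \M_X)$ is fine and saturated, \'etale-locally there is a chart by a fine saturated monoid $P$ and a \emph{smooth} morphism $f\colon X \to \Spec k[P]$ through which $\M_X$ is induced by the chart. The strata of $\Spec k[P]$ for its standard log structure are its torus orbits; an orbit of codimension $c$ is regular and carries a log structure of rank $c$, and the open orbit has rank $0$. Pulling these facts back along $f$ (which is flat and equidimensional, being smooth) shows that each non-empty stratum $X_j$ is regular with $\codim_X X_j = j$ and that $X_0$ is dense, which is precisely dimensional regularity (Definition~\ref{dimreg}) with $r = 0$ together with regularity of the strata.

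For the reverse implication I would fix a point $x \in X$, set $\kappa = k(x)$ and $P = \M_x/\O_{X,x}^*$. Since $(X, \M_X)$ is fine and saturated, $P$ is a fine saturated sharp monoid, so $P^{gp}$ is torsion-free and Proposition~\ref{goodchart}(1) gives a (good) chart $P \to \M_X$ near $x$. The crux is a complete-local structure theorem. Let $I(\M_X, x)$ be the boundary ideal, so that $V(I(\M_X, x))$ is the stratum through $x$; log regularity says this stratum is regular, and dimensional regularity with $r = 0$ says $\dim \O_{X,x} = \dim \O_{X,x}/I(\M_X, x) + \rank P^{gp}$. I would show these facts force an isomorphism
\[
\widehat{\O}_{X,x} \;\cong\; \kappa\llbracket P \rrbracket \llbracket t_1, \dots, t_d\rrbracket, \qquad d = \dim \O_{X,x}/I(\M_X, x),
\]
compatibly with the $P$-induced log structure, by lifting a regular system of parameters of the stratum to $t_1, \dots, t_d$ and a generating set of $P$ to $\widehat{\O}_{X,x}$ to obtain a surjection from $\kappa\llbracket P\rrbracket\llbracket t\rrbracket$, then proving it an isomorphism by a dimension count using the Cohen--Macaulayness of $k[P]$ (Hochster). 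Because $k$ is perfect, $\kappa/k$ is separably generated, so $\kappa\llbracket t_1,\dots,t_d\rrbracket$ is formally smooth over $k$; the structure isomorphism then exhibits $f\colon X \to \Spec k[P]$ as smooth near $x$, and since $\Spec k[P]$ with its standard log structure is log smooth over $(\Spec k, k^*)$, it follows that $(X, \M_X)$ is log smooth near $x$.

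I expect the main obstacle to be this complete-local structure theorem: splitting off the monoid algebra $\kappa\llbracket P\rrbracket$ as a formal factor of $\widehat{\O}_{X,x}$ using nothing but the regularity of the strata and the dimension identity is where the genuine commutative algebra sits --- in particular one must control the ideal generated by the lifted monoid generators and invoke Cohen--Macaulayness of $k[P]$. The perfectness of $k$ enters at exactly one, essential, point, namely in upgrading ``formally smooth over the residue field $\kappa$'' to ``formally smooth over $k$''; and the equivalence genuinely fails without it, since over an imperfect field a regular but non-smooth variety carrying its trivial log structure is log regular but not log smooth.
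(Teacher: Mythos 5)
The paper states Proposition~\ref{katoregular} without proof, simply citing Kato (\cite{KatoTS}, 8.3), and your outline is a faithful reconstruction of exactly that argument: the chart criterion for log smoothness handles ``log smooth $\Rightarrow$ log regular,'' the complete-local structure theorem $\widehat{\O}_{X,x}\cong\kappa\llbracket P\rrbracket\llbracket t_1,\dots,t_d\rrbracket$ handles the converse, and perfectness of $k$ enters precisely where you say, in upgrading formal smoothness over $\kappa$ to formal smoothness over $k$. So your proposal is correct and takes the same route as the source the paper defers to; no discrepancy to report.
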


This obviously subsumes the fact that toric varieties are dimensionally regular.

Dimensional regularity, however, is a much weaker condition than log smoothness.

\begin{example} Let $X$ be the affine plane $\Spec k[x,y]$ with non-standard structure generated by the monomial $xy$ (see Example~\ref{a2nstdex}). Then $X_1 = \Spec k[x,y]/(xy)$ is singular. So $X$ is not log smooth. But it is dimensionally regular. \end{example}

\begin{example} Consider the affine line $X = \Spec k[t]$ with log structure given by the map $k[x,y] \to k[t]$ which takes both generators $x, y$ to $t$. This is the induced structure on the diagonal $x = y$ of the plane $\Spec k[x,y]$. The rank of the log structure on $X$ is zero away from the origin but jumps to two at $t = 0$, so this log scheme is not dimensionally regular.\end{example}


\begin{example}\label{threelines} Take $\Spec k[x,y]$ with log structure generated by $x, y,$ and $x-y$. So the log structure is supported on the three lines $x = 0, y = 0, x = y$, on which it has rank one, except at the origin, where it has rank three. Similar to the previous example, this can be realised as the induced structure on the plane $z = x-y$ in $\Spec k[x,y,z]$ with its standard structure. \end{example}

\begin{prop}\label{dominantchart} Let $(X, \M)$ be an irreducible fine log scheme with a sharp chart $P$ near a point $x$. For each irreducible component $Z$ of the stratification of $X$ which contains $x$, choose units $u_1, ..., u_{\dim Z} \in \O_{X, x}$ which are algebraically independent in the residue field $\O_{X, Z}/\m_Z$ of $\O_{X,Z}$. Let $Q = P \oplus \N^{\dim Z}$ be the chart $P$ expanded to include these units. This $Q$ is also a chart for $(X, \M)$ over a neighbourhood $U$ of $x$ over which $u_1, ..., u_{\dim Z}$ are defined. 

Then $X$ is dimensionally regular near $x$ of minimum rank $r = 0$ if and only if the chart morphisms $U \to \Spec k[Q]$ for the various components $Z$ through $x$ are dominant. \end{prop}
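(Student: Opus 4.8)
The plan is to compare dimensions on both sides of the chart morphism $\varphi_Z\colon U \to \Spec k[Q]$ and use that $Q$ is sharp-plus-free to control the target. First I would observe that since $P$ is sharp, $\Spec k[P]$ has a central point (the origin, where all monomial generators vanish), and the stratification of $\Spec k[Q] = \Spec k[P] \times \A^{\dim Z}$ is the stratification of $\Spec k[P]$ crossed with the affine space: its strata are $\A^{\dim Z}$-bundles over the torus orbits of $\Spec k[P]$, and in particular the rank of the standard log structure of $\Spec k[Q]$ along each stratum equals the codimension of that stratum inside $\Spec k[Q]$. So $\Spec k[Q]$ is itself dimensionally regular of minimum rank $0$. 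Because $Q$ is a chart for $(X,\M)$ near $x$, the map $\varphi_Z$ is strict, hence the rank of $\M$ at a point $y \in U$ equals the rank of the standard log structure of $\Spec k[Q]$ at $\varphi_Z(y)$; and the stratum of $U$ through $y$ is the preimage of the stratum of $\Spec k[Q]$ through $\varphi_Z(y)$.

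Next I would fix a component $Z$ through $x$ and let $j$ be the rank of $\M$ along $Z$ (equivalently, the rank of the stratum of $\Spec k[Q]$ that $Z$ maps into). Let $S_j \subseteq \Spec k[Q]$ be that stratum; it has codimension $j$ in $\Spec k[Q]$, so $\dim S_j = \dim k[Q] - j = (\dim k[P] + \dim Z) - j$. The units $u_1,\dots,u_{\dim Z}$ were chosen to be algebraically independent in $\O_{X,Z}/\m_Z$, which is exactly the statement that the composite $Z \to U \to \Spec k[Q] \to \A^{\dim Z}$ is dominant onto $\A^{\dim Z}$; combined with the fact that $Z$ maps into the single torus orbit $O \subseteq \Spec k[P]$ cut out by its stratum, this says $\varphi_Z|_Z$ dominates the subvariety $O \times \A^{\dim Z}$ of $S_j$, which has dimension $\dim O + \dim Z$. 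Hence $\varphi_Z|_Z$ is dominant onto $S_j$ if and only if $\dim Z \geq \dim S_j - \dim O = \dim k[P] - j$, i.e. $\dim Z + j \geq \dim k[P]$. Meanwhile $\varphi_Z$ (on all of $U$) is dominant onto $\Spec k[Q]$ iff $\dim U = \dim k[Q]$, i.e. $\dim X = \dim k[P] + \dim Z$, i.e. $\dim X - \dim Z = \dim k[P]$. Since $\dim X - \dim Z = \codim_X Z = \codim_{S_j}(\text{generic pt of }Z)$ plus $j$ when the inclusion of strata is as expected, I would set $c_Z := \dim X - \dim Z = \codim_X Z$ and note $c_Z \geq j - 0 = j$ always holds in a fine log scheme once one knows $Z$ lies in the closure of the dense rank-$0$ stratum; dimensional regularity near $x$ with $r = 0$ is precisely the assertion $c_Z = j$ for every such $Z$.

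Finally I would assemble: $\varphi_Z$ is dominant $\iff \dim X = \dim k[P] + \dim Z \iff c_Z = \dim k[P] - j + j = \dim k[P]$... — more carefully, I need the identity $\dim k[P] = j$, which holds because $P$ is sharp and the stratum $S_j$ of $\Spec k[Q]$ through the image of $Z$ has rank $j$ equal to its codimension, and the smallest stratum (the central point crossed with $\A^{\dim Z}$) has rank $\dim k[P]$; so the rank $j$ attained along $\varphi_Z(Z)$ satisfies $j \leq \dim k[P]$ with equality iff $\varphi_Z(Z)$ hits the deepest stratum. The upshot is the chain of equivalences: $\varphi_Z$ dominant $\iff \dim X - \dim Z = \dim k[P] \iff \codim_X Z = j$ (using $\dim k[P] = j$, which itself follows once $\varphi_Z|_Z$ is dominant onto its stratum, guaranteed by the algebraic-independence choice of the $u_i$) $\iff (X,\M)$ dimensionally regular near $x$ with $r = 0$, applied across all components $Z$. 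The main obstacle I anticipate is bookkeeping the two potentially different notions of ``codimension of a stratum'' — codimension inside $X$ versus codimension inside $\Spec k[Q]$ — and verifying that strictness of $\varphi_Z$ makes these match up stratum by stratum, together with pinning down $\dim k[P] = j$ cleanly; once strictness is invoked to transport the stratification, the dimension count is bism straightforward, but getting the quantifier over components $Z$ to interact correctly with ``near $x$'' (i.e. that $x$ may be taken central in $U$, as in Remark~\ref{centralpoint}) will need a little care.
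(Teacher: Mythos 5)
There is a genuine gap at the heart of the proposal: dominance cannot be decided by comparing $\dim U$ with $\dim \Spec k[Q]$. Your pivotal biconditional ``$\varphi_Z$ is dominant onto $\Spec k[Q]$ iff $\dim U = \dim k[Q]$'' is false as stated --- equality of dimensions of source and target says nothing about density of the image (consider $(x,y)\mapsto (x,x)$ from $\A^2$ to $\A^2$). What actually controls dominance is the transcendence degree over $k$ of the subfield of $k(X)$ generated by the pullbacks of the coordinate functions of $\Spec k[Q]$, and the substantive direction of the proposition (dimensional regularity implies dominance) requires proving that $u_1,\dots,u_{\dim Z}$ together with the images $x_1,\dots,x_j\in\m_Z$ of the generators of $P$ satisfy \emph{no} polynomial relation over $k$ when $j\le\codim_X Z$. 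Having few enough elements does not make them algebraically independent, so this cannot be a dimension count. This is exactly where the paper does its real work: given a hypothetical relation $G=0$ in $\O_{X,Z}$, it reduces modulo $\m_Z^{d+1}$ for the minimal $d$ with all terms in $\m_Z^d$, expresses the monomials in the $x_i$ in a basis of $\m_Z^d/\m_Z^{d+1}$, and contradicts the algebraic independence of the $\bar u_i$ in the residue field. Your proposal has no substitute for this step. (The other direction --- too many functions forces a relation, hence the image lies in a proper closed subset --- is an honest transcendence-degree count and does match the paper.)

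Two subsidiary steps also fail. From dominance of the composite $Z\to\A^{\dim Z}$ you infer that $\varphi_Z|_Z$ dominates $O\times\A^{\dim Z}$; dominating one factor of a product does not give dominance onto the product (again $t\mapsto(t,t)$), and this is the same missing independence statement as above. Moreover the identity $\dim k[P]=j$ on which your final chain of equivalences leans is generally false: $j$ is the rank of $\M$ along $Z$, which is $\rank P^{gp}$ minus the rank of the face of $P$ becoming invertible at the generic point of $Z$, so for components $Z$ through $x$ other than the deepest one this face is nontrivial and $j<\rank P^{gp}$; your proposed justification of the identity appeals to the dominance of $\varphi_Z|_Z$ onto its stratum, which is precisely the unproven point, so the argument is circular. (Minor: describing the strata of $\Spec k[Q]$ as torus orbits presumes $P$ saturated and torsion-free, which is not assumed; the correct statement is in terms of faces of $P$, though this is not the essential difficulty.)
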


\begin{proof} Let $j$ be the rank of $\M$ along $Z$, and let $x_1, ..., x_j \in \m_Z$ be the images of generators in $P$ for the log structure on $Z$. 

If $j > \codim_X Z$ then the elements $u_1, ..., u_{\dim Z}, x_1, ..., x_j \in \O_{X,Z}$ number more than the transcendence degree $\dim X$ of the fraction field of $\O_{X, Z}$ over $k$, so satisfy a polynomial equation $G = 0$ with coefficients in $k$. The image of $U \to \Spec k[Q]$ is then contained in the proper locus $G = 0$. 

On the other hand if $j \leq \codim_X Z$ then there is no such polynomial equation $G = 0$ over $k$. For supposing there were, let $d$ be the smallest number such that every term of $G$ lies in $\m_Z^d$, and consider the equation $G = 0$ modulo $\m_Z^{d+1}$. Writing the monomials in $x_1, ..., x_j$ that appear in $G$ modulo $\m_Z^{d+1}$ in terms of a basis of $\m_Z^d / \m_Z^{d+1}$ over the residue field $\O_{X, Z}/\m_Z$, we obtain a polynomial with co-efficients in $\O_{X,Z}/\m_Z$ satisfied by the units $u_1, ..., u_{\dim Z}$, contrary to assumption. \end{proof}

\subsection{The main theorem.} The necessity of dimensional regularity in Theorem~\ref{maintheorem} is a straight-forward consequence of Proposition~\ref{logjetbundle}. Recall that for a fine log scheme $(X, \M)$ we write $J_m(X, \M)_j$ and $J_\infty(X, \M)_j$ for the log $m$-jets and log arcs over the stratum $X_j$.

\begin{prop}\label{maintheoremnec} Let $(X, \M)$ be a fine log scheme with $X$ irreducible. If $(X, \M)$ is not dimensionally regular then $J_\infty(X, \M)$ is reducible. \end{prop}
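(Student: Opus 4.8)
\emph{Plan.} The idea is to produce an irreducible closed subset of $J_\infty(X,\M)$ sitting over a ``bad'' stratum of $X$ which, at the level of $m$-jets, has strictly larger dimension than the closure of the log arcs over the main (dense) stratum; this forces a second irreducible component. The dimension count will come straight from Corollary~\ref{logjetdimcount}, and the geometry from Proposition~\ref{logjetbundle}(2) together with Kolchin's theorem.

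\emph{Step 1 (combinatorial input).} First I would show that failure of dimensional regularity forces a stratum $X_j$ to have a component $Z$ with $\codim_X Z < j - r$. One may assume $r=0$, since replacing $\M$ by its quotient by the sheaf of sections mapping to $0$ in $\O_X$ changes neither dimensional regularity nor the irreducibility of $J_\infty(X,\M)$ (it only multiplies the log arc space by an affine factor, as in the proof of Proposition~\ref{logjetbundle}). Now $Z(\M) = X - X_r$ is a closed subset of pure codimension one; applying this observation recursively to each of its components equipped with the induced log structure, and using the equivalences of Proposition~\ref{dimregchar}, one sees that so long as every stratum has its expected codimension the rank of $\M$ can increase by only one as the codimension increases by one. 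Hence if $(X,\M)$ is not dimensionally regular, at some stage a codimension one component of $Z(i^*\M)$ carries rank strictly larger than expected, which is exactly a stratum component $Z$ of some $X_j$ with $\codim_X Z < j-r$; write $c = \codim_X Z$, so $j - r - c \ge 1$.

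\emph{Step 2 (the extra component).} Fix such a $Z$ and let $Z^{sm}$ be its smooth locus, a nonempty open subvariety of $Z$. Since $Z^{sm}$ is smooth and irreducible over $k$, Kolchin's theorem makes $J_\infty(Z^{sm})$ irreducible, so by Proposition~\ref{logjetbundle}(2) the space of log arcs over $Z^{sm}$ is an affine bundle over it and is irreducible; let $V$ be its closure in $J_\infty(X,\M)$. Similarly let $W$ be the closure of the space of log arcs over $X^{sm}\cap X_r$, the smooth locus of the main stratum, which is dense open in $X$ (characteristic zero). The locus of log arcs over $X^{sm}\cap X_r$ is $\pi^{-1}(X^{sm}\cap X_r)$ for the projection $\pi\colon J_\infty(X,\M)\to X$, hence open and nonempty. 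So if $J_\infty(X,\M)$ were irreducible this open set would be dense, giving $W = J_\infty(X,\M)$ and therefore $V\subseteq W$.

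\emph{Step 3 (the dimension count).} Let $\tau_m\colon J_\infty(X,\M)\to J_m(X,\M)$ be the truncation. Because $X^{sm}\cap X_r$ and $Z^{sm}$ are smooth, the truncation maps on their ordinary arc spaces are surjective, and the affine-bundle fibre directions ($\Z^r$- resp. $\Z^j$-indexed families of power series) surject onto their truncations; hence $\tau_m$ carries the log arcs over $X^{sm}\cap X_r$ onto all of $J_m(X,\M)$ over $X^{sm}\cap X_r$, and the log arcs over $Z^{sm}$ onto all of $J_m(X,\M)$ over $Z^{sm}$. Consequently $\overline{\tau_m(W)}$ is the closure of the log $m$-jets over $X^{sm}\cap X_r$, of dimension $(m+1)(\dim X + r) - r$ by Corollary~\ref{logjetdimcount}, while $\tau_m(V)$ contains the log $m$-jets over $Z^{sm}$, of dimension $(m+1)(\dim X - c) + mj$. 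If $V\subseteq W$, this forces
\[
(m+1)(\dim X - c) + mj \;\le\; (m+1)(\dim X + r) - r ,
\]
that is $m(j - r - c) \le c$; since $j - r - c \ge 1$ this fails once $m > c$. Hence $J_\infty(X,\M)$ is reducible.

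\emph{Main obstacle.} The substance is Step 1: turning ``not dimensionally regular'' into the explicit inequality $\codim_X Z < j-r$ for a concrete stratum component, which is where the structure theory of the stratification of a fine log scheme (pure codimension one of $Z(\M)$, and Proposition~\ref{dimregchar}) really enters; one must also be careful that the recursion there is run component by component on the possibly reducible $Z(\M)$. After that, Step 3 is a routine affine-bundle dimension count, using only the standard surjectivity of truncation over smooth loci and the identity $\overline{\tau_m(\overline{S})} = \overline{\tau_m(S)}$, and Step 2 uses Kolchin's theorem exactly as in the heuristic sketched in the introduction.
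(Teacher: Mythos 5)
Your proof is correct and follows essentially the same route as the paper: both arguments reduce to the dimension count of Corollary~\ref{logjetdimcount} at a finite jet level $m$, comparing the log $m$-jets over the smooth locus of the dense stratum $X_r$ (which must be dense in the truncation of the arc space if $J_\infty(X,\M)$ were irreducible) with the higher-dimensional log $m$-jets over a stratum component of unexpectedly small codimension. The only differences are presentational: you spell out in Step~1 the reduction from ``not dimensionally regular'' to the existence of a component $Z\subseteq X_j$ with $\codim_X Z<j-r$ (which the paper takes as immediate), and you package Steps~2--3 via closures and truncation of the two arc loci rather than via the image $C_m$ of the whole arc space in $J_m(X,\M)$.
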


\begin{proof} We may assume that $X$ is reduced. Let $r$ be the minimum rank of $\M$ on $X$ and let $j > r$ be such that $X_j$ has codimension less than $j - r$. 

Write $J_m(X, \M)_\ell^{sm}$ for the log $m$-jets of $X$ over the smooth locus of a stratum $X_\ell$. According to Corollary~\ref{logjetdimcount}, for $m$ large enough we have $$\dim J_m(X, \M)_j^{sm} > \dim J_m(X, \M)_r^{sm}.$$ Let $C_m \subseteq J_m(X, \M)$ be the image of $J_\infty(X, \M)$ under the natural projection. It is the constructible subset of log $m$-jets which extend to log arcs. If $J_\infty(X, \M)$ is irreducible then so is $C_m$ (as a topological space), and further $C_m$ has $J_m(X, \M)_r^{sm}$ as a dense subset. But $C_m$ also contains $J_m(X, \M)_j^{sm}$, a contradiction. \end{proof}

We turn to establishing the converse implication. We will make use of the following observation:

\begin{prop}\label{kolchinstratum} Let $X_j$ be a stratum of a fine log scheme $(X, \M)$. The irreducible components of $J_\infty(X, \M)_j$ biject with the irreducible components of $X_j$, and the log arcs over the smooth locus $X_j^{sm}$ of $X_j$ are dense in $J_\infty(X, \M)_j$. \end{prop}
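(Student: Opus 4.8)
The plan is to deduce everything from the affine bundle description in Proposition~\ref{logjetbundle}(2) together with Kolchin's theorem applied to the ordinary arc space of the stratum. Give $X_j$ its reduced structure (harmless, since an arc factors through $X_{red}$ and, being a map out of the integral scheme $\Spec k\llbracket t\rrbracket$, through a single irreducible component), and assume, as in Theorem~\ref{maintheorem}, that $k$ has characteristic zero and $X$ is of finite type, so that Kolchin's theorem applies to the irreducible components of $X_j$. By Proposition~\ref{logjetbundle}(2) the natural map $p\colon J_\infty(X,\M)_j \to J_\infty(X_j)$ is an affine bundle, with fibre the (infinite-dimensional, but irreducible) affine space of maps $\Z^j \to 1 + tk\llbracket t\rrbracket$; and the structure map $\pi\colon J_\infty(X,\M)_j \to X_j$ of the log arc space over $X_j$ is the composite of $p$ with $J_\infty(X_j)\to X_j$, so the log arcs over $X_j^{sm}$ are exactly $\pi^{-1}(X_j^{sm}) = p^{-1}\bigl(J_\infty(X_j^{sm})\bigr)$.

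First I would isolate the elementary properties of an affine bundle $p\colon E\to B$ that are needed, all of which follow from Zariski-local triviality and the irreducibility of an arbitrary affine space: (i) if $B$ is irreducible then so is $E$, since $E$ is covered by opens over which $p$ is trivial, each a product of irreducibles, and these opens pairwise meet; (ii) hence, when $B$ has finitely many components, $Z\mapsto p^{-1}(Z)$ is a bijection from the irreducible components of $B$ onto those of $E$, because each $p^{-1}(Z)$ is irreducible and closed, $E = \bigcup_Z p^{-1}(Z)$, and $p^{-1}(Z)\setminus p^{-1}\bigl(\bigcup_{Z'\ne Z}Z'\bigr) = p^{-1}\bigl(Z\setminus\bigcup_{Z'\ne Z}Z'\bigr)$ is non-empty; and (iii) for any subset $S\subseteq B$, $p^{-1}(S)$ is dense in $p^{-1}(\overline S)$ (reduce to $\overline S = B$ irreducible, where $p^{-1}(S)$ is a non-empty open of the irreducible space $E$). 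Next I would apply Kolchin's theorem to $X_j$: an arc on $X_j$ factors through a single component, so $J_\infty(X_j) = \bigcup_W J_\infty(W)$ over the (finitely many, reduced) components $W$ of $X_j$, each $J_\infty(W)$ is irreducible, and these are precisely the components of $J_\infty(X_j)$; moreover $X_j^{sm}$ meets each $W$ in a dense open subset (a reduced variety over a perfect field is generically smooth), so $J_\infty(X_j^{sm})$ meets each $J_\infty(W)$ in a non-empty open, hence dense, subset — that is, $J_\infty(X_j^{sm})$ is dense in $J_\infty(X_j)$, the same density input used in the proof of Proposition~\ref{finitemonoidalgebra}.

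Combining these yields both assertions at once: by (ii) applied to $p$, the components of $J_\infty(X,\M)_j$ are the $p$-preimages of the components of $J_\infty(X_j)$, which by Kolchin biject with the components of $X_j$; and by (iii), since $J_\infty(X_j^{sm})$ is dense in $J_\infty(X_j)$, its preimage $\pi^{-1}(X_j^{sm}) = p^{-1}\bigl(J_\infty(X_j^{sm})\bigr)$ is dense in $p^{-1}\bigl(J_\infty(X_j)\bigr) = J_\infty(X,\M)_j$. The only point I expect to require care rather than real work is the affine-bundle bookkeeping at infinite level, where $J_\infty$ is an inverse limit and the fibre is an infinite-dimensional affine space rather than a finite-type scheme; but Proposition~\ref{logjetbundle}(2) already provides Zariski-local triviality at level $\infty$, and both ``an affine space is irreducible'' and ``a non-empty open of an irreducible space is dense'' are insensitive to finite-type hypotheses, so this is a matter of being slightly careful. (Alternatively one could run the argument at each finite level via Proposition~\ref{logjetbundle}(1), valid since $\chr k = 0$, and pass to the limit, but that merely exchanges one limiting subtlety for another.)
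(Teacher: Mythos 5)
Your proposal is correct and follows exactly the route of the paper's (much terser) proof: Kolchin's theorem applied to the ordinary arc space of the stratum $X_j$, combined with the affine bundle description of $J_\infty(X,\M)_j \to J_\infty(X_j)$ from Proposition~\ref{logjetbundle}(2). The elementary affine-bundle lemmas (i)--(iii) and the care about the infinite-dimensional fibre are details the paper leaves implicit, but you have supplied them correctly.
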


\begin{proof} This follows from Kolchin's theorem for the ordinary jets on the stratum $X_j$, together with the characterisation of the natural map $J_\infty(X, \M)_j \to J_\infty(X_j)$ as a bundle map. \end{proof}

Perhaps tendentiously, this suggests the strategy of describing the components of $J_\infty(X, \M)$ by comparing the arcs on one level of the stratification with the arcs on the smooth locus of the next. The simplest situation (apart from the base case $X = X_r$, where the result is just Kolchin's theorem) is in the following lemma. It will be the inductive step of our argument.

\begin{lemma}\label{basicstep} Let $(X, \M)$ be a fine log scheme with $X$ irreducible and such that $X = X_r \cup X_s$ for some $s > r$, (with $X_r, X_s$ non-empty). Then $J_\infty(X, \M)$ is irreducible if and only if $s = r + 1$. \end{lemma}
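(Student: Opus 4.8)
The plan is to handle the two implications separately, with the "only if" direction already essentially covered by Proposition~\ref{maintheoremnec} (the codimension hypothesis fails when $s > r+1$, since $X = X_r \cup X_s$ forces $\codim_X X_s = 1 < s - r$), so the content is the "if" direction: assuming $s = r+1$, I must show $J_\infty(X, \M)$ is irreducible. First I would reduce to the case $r = 0$ by the remark following Definition~\ref{dimreg} together with Proposition~\ref{logjetbundle}: passing from $\M$ to $\M - I$ (where $I$ is the sheaf of sections mapping to zero) only multiplies every log jet space by an affine factor $\A^{mr}$ (or, for arcs, by an affine space), which does not affect irreducibility. So assume $X_0$ is open dense in $X$ and $X_1 = X - X_0$ is a Cartier divisor of pure codimension one. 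I may also assume $X$ reduced, by the opening remark of the paper.

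Next I would invoke Proposition~\ref{kolchinstratum} twice. It tells us that $J_\infty(X, \M)_0$ is irreducible (since $X_0$ is irreducible, being a dense open in the irreducible $X$), with the arcs over $X_0^{sm}$ dense in it; and that the irreducible components of $J_\infty(X, \M)_1$ biject with those of $X_1$, with arcs over $X_1^{sm}$ dense. Since $J_\infty(X, \M) = J_\infty(X, \M)_0 \sqcup J_\infty(X, \M)_1$ set-theoretically, irreducibility of $J_\infty(X, \M)$ will follow once I show that $J_\infty(X, \M)_0$ is dense, i.e. that every log arc lying over $X_1$ is a limit of log arcs lying over $X_0$. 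By the density statements it suffices to show that a log arc over a general (smooth) point of each component $Z$ of $X_1$ lies in the closure of $J_\infty(X, \M)_0$.

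The heart of the argument — and the step I expect to be the main obstacle — is this local deformation: given a component $Z$ of $X_1$, work near a general point $x \in Z^{sm}$, where I can take a good sharp chart $P$ for $\M$ (existence via Proposition~\ref{goodchart}, using generality of $x$ and $\chr k = 0$; if $X$ is not normal there, use the étale-local chart and Proposition~\ref{etalebasechg2} to replace $X$ by a strict étale cover, which does not affect irreducibility of the log arc space). Since $\codim_X Z = 1$ and $r = 0$, the rank of $\M$ along $Z$ is exactly $1$, so locally the chart looks like $\N \oplus (\text{units})$ and, shrinking, $X$ looks like $\A^1 \times Z^{sm}$ with the $\A^1$ carrying its standard log structure and a coordinate $x$ cutting out $Z$; here I would use Proposition~\ref{logjetbundle} to describe $J_\infty(X,\M)$ near arcs over $Z^{sm}$ concretely: such an arc is an ordinary arc in $Z^{sm}$ (with trivial log part by the $\A^1$ being "at the origin") together with, from the $\A^1$-factor, a log arc at the origin of $\A^1$, which by the monoid-algebra computation (Proposition~\ref{monoidalgebra}) is a map $\Z \to 1 + tk\llbracket t\rrbracket$, i.e. a formal unit $u(t)$. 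The deformation is then explicit: replace the arc given by $(x \mapsto 0,\ u(t))$ by the family $x \mapsto \epsilon,\ u(t)$ for $\epsilon \ne 0$, which lands in $J_\infty(X,\M)_0$ for $\epsilon \neq 0$ and specialises to the original arc as $\epsilon \to 0$; this exhibits the given arc over $Z^{sm}$ in the closure of $J_\infty(X, \M)_0$. Combined with the density statements of Proposition~\ref{kolchinstratum}, this shows $J_\infty(X, \M)_0$ is dense in all of $J_\infty(X,\M)$, which is therefore irreducible. The delicate point throughout is making the "local picture is $\A^1 \times Z^{sm}$ with standard log structure" precise — i.e. checking that a good chart plus the rank-one, codimension-one hypothesis really does strictly étale-locally trivialise the log structure in this product form — and ensuring the étale base changes invoked to get there are compatible with Proposition~\ref{etalebasechg2}.
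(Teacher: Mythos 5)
Your overall strategy is the same as the paper's (reduce to $r=0$, use Proposition~\ref{kolchinstratum} to reduce to showing the log arcs over $X_0$ are dense, then analyse the local picture near a general point of a component $Z$ of $X_1$), and the "only if" direction and the reduction to $r=0$, $s=1$ are fine. But the step you yourself flag as "the delicate point" --- that étale-locally $X$ is $\A^1 \times Z^{sm}$ with the \emph{standard} log structure on the $\A^1$-factor --- is a genuine gap, and it hides two separate problems that the paper resolves with two separate tools you do not invoke. First, $X$ may be singular along all of $Z$ (e.g.\ a product of a cuspidal curve with a smooth variety, with the log structure supported over the cusp locus), and no strict étale cover will fix that: your appeal to Proposition~\ref{etalebasechg2} only addresses the existence of a good chart, not the smoothness of $X$ along $Z$ needed for the product description. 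The paper passes to the normalisation $\tilde X \to X$ with the pulled-back log structure, deletes a codimension-$\geq 2$ locus so that $\tilde X$ is smooth and $\tilde X \to X$ is étale over $X_1$, and uses surjectivity of $J_\infty(\tilde X, \tilde\M) \to J_\infty(X,\M)$ to reduce to $X$ smooth.

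Second, even with $X$ smooth and a good sharp chart $P$ of rank one at the generic point $\xi$ of $Z$, $P$ need not be $\N$ mapping to a local equation of $Z$: the stalk $\M_\xi/\O_{X,\xi}^*$ can be non-saturated (as in Example~\ref{cuspidalcubic}), can have torsion, and even when it is $\N$ its generator can map to a proper power of the local equation (e.g.\ $\A^1$ with log structure generated by $x^2$). So the local model is \emph{not} literally the standard structure, and your explicit family $x \mapsto \epsilon\, u(t)$ does not directly apply. The paper handles this by introducing the divisorial log structure $\M'$ generated by local equations of $X_1$, producing the valuation morphism $\M \to \M'$, and invoking Lemma~\ref{smallstrucchange} (which is where characteristic zero enters, via invertibility over $k$ of the integer matrix on group completions) to replace $\M$ by $\M'$ without changing the log arc space. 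Once both reductions are made, your concluding deformation is essentially the paper's appeal to Proposition~\ref{finitemonoidalgebra} plus Proposition~\ref{etalebasechg2}; without them, the argument as written fails on examples such as the ones above.
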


\begin{proof} The necessity of the condition $s = r + 1$ is a special case of Proposition~\ref{maintheoremnec}. We consider the converse implication. We may assume that $X$ is reduced.

We may replace $X$ by an open subset which meets $X_s$ and does not meet the singular loci of $X_r$ and $X_s$, and thereby assume first that the singular locus of $X$ lies inside $X_s$ and second that $X_s$ is smooth. For by Proposition~\ref{kolchinstratum} the arcs over this open subset are dense in $J_\infty(X, \M)$. We may further replace the sheaf $\M$ by the complementary face of its zero ideal, and hence assume that $r = 0$ and $s = 1$.

By assumption, either $X$ is smooth or it is singular along $X_1$. Consider the normalisation $\tilde{X} \to X$, with log structure $\tilde{\M}$ pulled back from $X$. This is an isomorphism over the smooth locus $X_0$, and after deleting from $X$ a (positive-codimension) subset of $X_1$ we may assume that $\tilde{X}$, which was non-singular in codimension one, is in fact smooth, and that the normalisation is unramified, hence \'etale, over $X_1$. In particular, the map $J_\infty(\tilde{X}, \tilde{\M}) \to J_\infty(X, \M)$ surjects. Thus in any case we are reduced to the situation where $X$ is smooth.

Now let $\M'$ be the log structure on $X$ generated by local equations for the divisor $X_1$. Near a component $Z$ of $X_1$ with generic point $\xi$ this has a chart $\O_{X,\xi}/\O_{X,\xi}^* \isom \N$ and the local valuation maps $$\M_\xi \to \M_\xi/\O_{X, \xi}^* \to \O_{X, \xi} / \O_{X,\xi}^*$$ determine a morphism $\M \to \M'$ of log structures on $X$. By construction, Lemma~\ref{smallstrucchange} applies to this morphism, and we may replace $\M$ by $\M'$.

In particular we now are reduced to having $(X,\M)$ fine, saturated, and log regular. According to Proposition~\ref{katoregular} this means that $X$ is log smooth, which as one may expect is sufficient for the conclusion we seek. However, a short calculation finishes the argument in any case. For now locally near $Z \subseteq X_1$ there is the chart $X \to \Spec k[x]$, where $x$ is a local equation for $Z$, a smooth morphism. So there is locally an \'etale map from $X$ to some affine space $Y = \Spec k[x, z_1, ..., z_t]$ over $\Spec k[x]$, with log structure generated by $x$. By Proposition~\ref{finitemonoidalgebra}, or else by an easy direct computation, the log arc (and log jet) spaces of $Y$ with this log structure are irreducible, and the log arcs over the complement of the hyperplane $x = 0$ are dense. An \'etale map of schemes which is strict is log \'etale, so the induced map on log arc spaces is also \'etale. We are in the situation of Proposition~\ref{etalebasechg2}, and the claim follows.\end{proof}

We are ready to argue for Theorem~\ref{maintheorem}. 

\begin{prop}\label{irredcrit} Let $(X, \M)$ be a fine log scheme with $X$ irreducible. Then $J_\infty(X, \M)$ is irreducible if and only if $(X, \M)$ is dimensionally regular. \end{prop}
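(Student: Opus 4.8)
The necessity of dimensional regularity is Proposition~\ref{maintheoremnec}, so what remains is the converse. I would prove it by induction on the \emph{depth} $d=b-r$ of the stratification, where by Proposition~\ref{dimregchar}(4) the non-empty strata are exactly $X_r,X_{r+1},\dots,X_b$ and $X_\ell\subseteq\overline{X_j}$ whenever $r\le j\le\ell\le b$. Since the divisor $Z(\M)$ that arises in the inductive step can be reducible, I would actually run the induction in the formulation of the opening remark, where $X$ is allowed reducible with each component dimensionally regular and one asserts that the components of $J_\infty(X,\M)$ biject with those of $X$; the two formulations are equivalent. The base case $d=0$ is $X=X_r$: then $J_\infty(X,\M)$ is an affine bundle over $J_\infty(X)$ by Proposition~\ref{logjetbundle}, and $J_\infty(X)$ is irreducible by Kolchin's theorem, so $J_\infty(X,\M)$ is irreducible.

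For the inductive step, let $Z=Z(\M)=\bigcup_{j>r}X_j$. Because the log structure on $j_\infty$ used to define log arcs maps into $\O_{j_\infty}$ with image contained in $\{0\}\cup\O_{j_\infty}^{*}$, for every section $p$ of $\M$ the pullback $\gamma^{*}\alpha(p)$ along a log arc $\gamma$ is either zero or invertible; since $Z$ is locally cut out by such elements $\alpha(p)$, every log arc of $X$ lies entirely inside the stratum containing its closed point. Hence as topological spaces
\[ J_\infty(X,\M)=J_\infty(X_r,\M|_{X_r})\ \cup\ J_\infty(Z,i^{*}\M), \]
the first piece being the open subset $\pi^{-1}(X_r)$ — irreducible, being an affine bundle over the Kolchin-irreducible $J_\infty(X_r)$ — and the second a closed subset. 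Therefore $J_\infty(X,\M)$ is irreducible exactly when $J_\infty(Z,i^{*}\M)\subseteq\overline{J_\infty(X_r,\M|_{X_r})}$. Now $Z$ is a Cartier divisor, so of pure codimension one, and by Proposition~\ref{dimregchar}(2) it is dimensionally regular with minimum rank $r+1$; a short dimension count (using that $X$ is dimensionally regular) shows each irreducible component of $Z$ is again dimensionally regular, with the same minimum rank $r+1$, and of depth strictly less than $d$. By the inductive hypothesis the components of $J_\infty(Z,i^{*}\M)$ biject with those of $Z$, and since the minimal stratum $X_{r+1}\cap W$ is open and dense in each component $W$ of $Z$, the log arcs lying in $X_{r+1}$ are dense in $J_\infty(Z,i^{*}\M)$. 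So it suffices to show that every log arc lying in $X_{r+1}$ lies in $\overline{J_\infty(X_r,\M|_{X_r})}$.

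By Proposition~\ref{kolchinstratum} applied to the stratum $X_{r+1}$, the log arcs over any dense open subset of $X_{r+1}$ — in particular over a dense open smooth subset — are dense among all log arcs over $X_{r+1}$. I would therefore argue near the generic point $\xi$ of a component of $X_{r+1}$: dimensional regularity forces any deeper stratum $X_\ell$ with $\ell>r+1$ to have codimension at least $2$ in $X$, so the codimension-one set $X_{r+1}$ cannot lie in $\overline{X_\ell}$, and hence some open neighbourhood $U$ of $\xi$ meets no stratum beyond $X_{r+1}$ and misses $\Sing X_{r+1}$. Then $(U,\M|_U)$ has $U=(U\cap X_r)\cup(U\cap X_{r+1})$ with $s=r+1$, so Lemma~\ref{basicstep} applies and $J_\infty(U,\M|_U)$ is irreducible. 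Since $\pi^{-1}(U\cap X_r)$ is a non-empty open subset of both $J_\infty(U,\M|_U)$ and $J_\infty(X_r,\M|_{X_r})$, these two irreducible sets have the same closure; in particular the log arcs over $U\cap X_{r+1}$ lie in $\overline{J_\infty(X_r,\M|_{X_r})}$. Letting $U$ range over such neighbourhoods of the generic points of the components of $X_{r+1}$ exhibits a dense set of log arcs over $X_{r+1}$ inside $\overline{J_\infty(X_r,\M|_{X_r})}$, which completes the induction.

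The serious geometric content — normalisation, the reduction to the log-regular, hence (by Proposition~\ref{katoregular}) log-smooth, case, and \'etale descent via Propositions~\ref{finitemonoidalgebra} and \ref{etalebasechg2} — is already packaged in Lemma~\ref{basicstep}. The main obstacle in the present proposition is therefore organisational: setting up the induction on depth, checking that dimensional regularity passes to the (possibly reducible) divisor $Z(\M)$ and to its irreducible components, and verifying that the chain of density statements — log arcs over a stratum versus over its smooth locus, and log arcs of $X_r$ versus of a neighbourhood $U$ — is compatible with taking closures at each reduction, so that ``$J_\infty(X_r,\M|_{X_r})$ is dense'' really is inherited all the way up. One should also confirm, as in the opening remark, that passing between the irreducible and reducible formulations — used both in the base case and in applying the inductive hypothesis to $Z(\M)$ — is harmless.
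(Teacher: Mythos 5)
Your proposal is correct and takes essentially the same route as the paper's proof: necessity via Proposition~\ref{maintheoremnec}, then an induction on the stratification in which Lemma~\ref{basicstep} shows the log arcs over $X_r$ are dense among those over $X_{r+1}$, the inductive hypothesis applied to $Z(\M)$ with its induced structure (via Proposition~\ref{dimregchar} and the reducible/componentwise formulation) handles the deeper strata, and Kolchin's theorem through Proposition~\ref{kolchinstratum} supplies the density statements over each stratum. The paper compresses this into a few sentences; your write-up only makes explicit what it leaves implicit (that log arcs stay within the stratum of their closed point, that the induction is run on the components of $Z(\M)$, and that Lemma~\ref{basicstep} may be applied on a neighbourhood of the generic point of each component of $X_{r+1}$).
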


\begin{proof} After Proposition~\ref{maintheoremnec} it remains to show that if $(X, \M)$ is dimensionally regular then $J_\infty(X, \M)$ is irreducible. 

Let $r$ be the minimum rank of $\M$ on $X$. By Lemma \ref{basicstep} applied to the components of $$X_r \cup X_{r+1} = X - \cup_{j > r+1} X_j,$$ the log arcs $J_\infty(X, \M)_r$ are dense in $J_\infty(X, \M)_{r+1}$. Since $\cup_{j > r+1} X_j$ is contained in the closure of $X_{r+1}$, by Proposition~\ref{dimregchar}, by induction $J_\infty(X, \M)_{r+1}$ is dense in $\cup_{j \geq r + 2} J_\infty(X, \M)_j$. Therefore the irreducible $J_\infty(X, \M)_r$ is dense in $J_\infty(X, \M)$. So $J_\infty(X, \M)$ is irreducible. \end{proof}

\subsection{Remarks on positive characteristic.}\label{positivechar}  Let us consider briefly the case where $k$ has  $\chr k = p > 0$. Kolchin's theorem holds for all varieties over $k$ if and only if $k$ is perfect \cite{NS}, so we assume that $k$ is perfect. We easily see that the conditions of Theorem~\ref{maintheorem} are no longer sufficient:

\begin{example}\label{positivecharexample} Let $X = \Spec k[x]$ with log structure $\M$ generated by $x^p$. Away from $x = 0$ the $k$-valued log arcs are the ordinary arcs $x(t) = d_0 x + d_1 x t + ...$ with $d_0 x \neq 0$, which then have $$x(t)^p = (d_0 x)^p + (d_1 x)^p t^p + ... = (d_0 x)^p (1 + (\partial_1 x)^p t^p + ...).$$ The limits of these at $x = 0$ are log arcs, underlain by the zero ordinary arc, in which only powers of $t^p$ appear in the principal series $x^p(t)$. But a log arc at the origin corresponds to an arbitrary series $x^p(t) = 1 + \partial_1 (x^p) t + ....$ So the log arcs over the locus $x \neq 0$ are not dense in $J_\infty(X, \M)$. \end{example}


Here is one way to look at this. There is the map $$g: (\Spec k[x], x) \to (\Spec k[x], x^p)$$ to $X$ from the affine line with its standard structure. This is modelled on the map $p\N \to \N$ on charts, whose corresponding map on monoid algebras is the inseparable map $\Spec k[x] \to \Spec k[x^p]$. Now the induced map $p\Z \to \Z$ on group completions does not become an isomorphism on tensoring with $k$. So although $g$ is an isomorphism away from the origin $x = 0$, it does not induce a surjection of log arc spaces at the origin.


\begin{remark} Aside from this, our proof of Theorem~\ref{maintheorem} fails in general in positive characteristic at the normalisation step in Lemma~\ref{basicstep}. Again the issue is of having log structure on a locus that gives rise to an inseparable map. 

Of course one may give a sufficient criterion just by assuming the conclusions these steps were meant to obtain. So let $(X, \M)$ be a fine log scheme over a perfect field $k$ with $\chr k = p$. Assume \begin{itemize} \item[(1)] $(X, \M)$ is dimensionally regular of minimum rank $r$, 
\item[(2)] for each $j \geq r$, the scheme $\overline{X_j}$ is non-singular in codimension one, and
\item[(3)] for each $j \geq r$ and each generic point $\xi$ of the codimension one locus $X_{j+1} = \overline{X_j} - \overline{X_{j+2}}$ of $\overline{X_j}$, the valuation map $$\phi: \M_\xi/\O_{X, \xi}^*-I \to \O_{X, \xi}/\O_{X, \xi}^* \isom \N$$ (where $I$ is the zero ideal of $\M_\xi/\O_{X, \xi}^*$) is such that $\phi^{gp} \otimes k$ is an isomorphism; that is, $\M^{gp}_\xi/\O_{X, \xi}^* - I$ has rank one as an abelian group and no $p$-torsion part, and the image of $\phi$ is not contained in the submonoid $p\N$ of multiples of $p$. \end{itemize} Then $J_\infty(X, \M)$ is irreducible if $X$ is. That a log smooth scheme need not satisfy (2) illustrates that this criterion is not necessary.\end{remark}

\end{document}